\newtheorem{maintheorem}{Theorem}
\newtheorem{theorem}{Theorem}[section]
\newtheorem{corollary}[theorem]{Corollary}
\newtheorem{proposition}[theorem]{Proposition}
\newtheorem{lemma}[theorem]{Lemma}
\newtheorem{definition}[theorem]{Definition}
\theoremstyle{remark}
\newtheorem{remark}[theorem]{Remark}
\title[Measure Rigidity and Disintegration]{Measure Rigidity and Disintegration: \\ Time-one map of flows.}
\author{Gabriel Ponce}
\address{Departamento de Matem\'atica, Estat\'istica e Computa\c c\~ao Cient\'ifica,
  IMECC-UNICAMP, Campinas-SP, Brazil.}
  \email{gaponce@ime.unicamp.br}
\author{R\'egis Var\~{a}o} 
\address{Departamento de Matem\'atica, Estat\'istica e Computa\c c\~ao Cient\'ifica,
  IMECC-UNICAMP, Campinas-SP, Brazil.}% {\&  Department of Mathematics, University of Chicago, Chicago, USA.}}
\email{regisvarao@ime.unicamp.br}
\begin{document}

\maketitle

\begin{abstract}
An invariant measure for a flow is, of course, an invariant measure for any of its time-t maps. But the converse is far from being true. Hence, one may naturally ask: What is the obstruction for an invariant measure for the time-one map to be invariant for the flow itself? We give an answer in terms of measure disintegration. Surprisingly all it takes is the measure not to be ``too much pathological in the orbits". We prove the following rigidity result. If $\mu$ is an ergodic probability for the time-one map of a flow, then it is either highly pathological in the orbits, or it is highly regular (i.e invariant for the flow). In particular this measure rigidity result is also true for measurable flows by the classical Ambrose-Kakutani's representation theorem for measurable flows.
\end{abstract}

\setcounter{tocdepth}{1}
\tableofcontents

\section{Introduction}

A basic question answered on an introductory ergodic theory course is that we may always find invariant probability measures for many dynamical systems. Two particularly large classes of such dynamical systems are homeomorphisms and continuous flows on a compact manifold. We say that a measurable map $h:X \rightarrow X$ preserves a probability $\mu$ if for every measurable set $A \subset X$, then $\mu(h^{-1}(A))=\mu(A)$. If $\phi:\mathbb R \times X \rightarrow X$ is a flow, we say that the flow $\phi$ preserves the measure $\mu$ if it is preserved for every time $t$-map $\phi_t:=\phi(t,.)$. We say that a measure $\mu$ is ergodic for a certain dynamical system if any invariant set has either full measure or zero measure. It is not at all expected that an ergodic probability for the time-$t$ map to be ergodic (in particular invariant) for the flow itself. Hence a natural question arises:

\textbf{Question:} \textit{What is the obstruction for an ergodic measure for the time-one map to be ergodic (in particular invariant) for the flow itself?} 

To the best of our knowledge, even though this seems to be a natural question it has not been treated in the literature. We are able to give a precise answer to this question in terms of measure disintegration. Surprisingly all it takes is the measure not to be ``too much pathological in the orbits". That is, we prove a measure rigidity result. If $\mu$ is an ergodic probability for the time-one map of a flow, then it is either highly pathological in the orbits, or it is highly regular (i.e invariant for the flow). This is our main result:

\begin{maintheorem}\label{theo:continuous.flow}
Let $X$ be a separable metric space and $\phi:\mathbb R \times X \rightarrow X$ be a continuous flow. Denote by $\mathcal F=\{\mathcal F(x)\}_{x\in X}$ the foliation of $X$ by orbits of the flow $\phi$. Given any ergodic Borel measure $\mu$ for the time$-1$ map $\phi_1:= \phi(1,\cdot):X\rightarrow X$, then 
\begin{enumerate}
\item  either there is a set $A \subset X$ of full $\mu$-measure such that $\mathcal F(x) \cap A$ is a discrete set of $\mathcal F(x)$. 
Moreover, there is a natural number $k\geq 1$ such that $\mathcal F(x) \cap A$ is the $\phi_1-$orbit of exactly $k$ points; or
\item for $\mu$-almost every $x\in X$ there is a measure $\mu_{\mathcal F(x)}$ on $\mathcal F(x)$ such that
% given any arc $[x,\phi_t(x)] \subset \{\phi_t(x):t \in \mathbb R\}$ we have 
\[ \mu_{\mathcal F(x)}(\phi([0,t] \times \{x\})) = 2^{-1}t, \]
as long as $\tau \mapsto \phi(\tau,x)$ is injective on $[0,t]$ and where $\mu_{\mathcal F(x)}$ is a $\phi_1$-invariant measure which normalized and restricted on a foliated chart of the orbit is a disintegration of $\mu$. In particular $\mu$ is invariant for the flow.
% And $\mu_y=\mu_x$ if $y \in \mathcal F(x)$.
% $[\mu_x]$ such that $\mu^1_x((T_{-1}(x), T_1(x)))=1$.
\end{enumerate}
\end{maintheorem}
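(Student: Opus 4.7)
The plan is to disintegrate $\mu$ along the orbit foliation $\mathcal{F}$ via local flow boxes, split each conditional measure into its atomic and continuous parts, and let $\phi_1$-ergodicity decide which part survives. I would cover $X$ (up to a $\mu$-null set) by countably many flow boxes $U_i \cong T_i \times (-\epsilon_i,\epsilon_i)$ with plaques parametrised by flow-time and apply Rokhlin's disintegration theorem to write $\mu|_{U_i} = \int_{T_i} \mu^i_\tau \, d\nu_i(\tau)$ with $\mu^i_\tau$ a probability on the plaque through $\tau$. Because $\phi_1$ preserves $\mu$ and maps plaques to plaques (possibly across different boxes), the family $\{\mu^i_\tau\}$ is $\phi_1$-equivariant modulo the standard transition between boxes.

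Decomposing each $\mu^i_\tau = \mu^i_{\tau,\mathrm{at}} + \mu^i_{\tau,\mathrm{ct}}$ into its atomic and continuous parts and integrating over $T_i$ produces measures $\mu_{\mathrm{at}}$ and $\mu_{\mathrm{ct}}$ on $X$ with $\mu = \mu_{\mathrm{at}} + \mu_{\mathrm{ct}}$. Uniqueness of this decomposition together with the $\phi_1$-equivariance of the conditionals forces both $\mu_{\mathrm{at}}$ and $\mu_{\mathrm{ct}}$ to be $\phi_1$-invariant. Since they are both absolutely continuous with respect to $\mu$, their Radon--Nikodym derivatives are $\phi_1$-invariant functions, so $\phi_1$-ergodicity forces one of them to vanish, giving the desired dichotomy.

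In the atomic case $\mu = \mu_{\mathrm{at}}$, let $A$ be the set of atoms of the conditional measures. Finiteness of plaque mass (each $\mu^i_\tau$ is a probability) forces $A \cap \mathcal{F}(x)$ to be discrete in each orbit, and $\phi_1$-invariance of the atom set makes it a union of $\phi_1$-orbits; the number $k(x)$ of such orbits is measurable and $\phi_1$-invariant, and a Poincar\'e-style bound (a discrete set of length-$1$ steps inside a plaque is finite) keeps it finite, so by ergodicity it equals a constant $k$ a.e., yielding alternative (1). In the continuous case $\mu = \mu_{\mathrm{ct}}$, I would glue the local plaque measures along overlapping flow boxes of a fixed orbit to produce, for $\mu$-a.e.\ $x$, a Radon measure $\mu_{\mathcal{F}(x)}$ on $\mathcal{F}(x)$ which is atomless, $\phi_1$-invariant, and whose normalised restriction to any plaque recovers the local conditional. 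Writing $\mu_{\mathcal{F}(x)} = \rho(x)\, dt$ in the arc-length parameter, the density $\rho$ is a measurable $\phi_1$-invariant function on $X$; ergodicity forces $\rho$ to be a.e.\ constant and the normalisation inherited from the gluing pins this constant to $1/2$. A constant density in the arc-length parameter is equivalent to $\phi_s$-invariance for all $s \in \mathbb{R}$, so $\mu_{\mathcal{F}(x)}$ is flow-invariant on each orbit, lifting to full flow invariance of $\mu$.

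The main obstacle is the gluing step in the continuous case: the orbit partition of a continuous flow on a general separable metric space need not be measurable (the quotient $X/\mathcal{F}$ can fail to be standard Borel), so $\mu_{\mathcal{F}(x)}$ cannot be obtained from a single global disintegration but only synthesised from local data. One must check both its well-definedness along each orbit and the $\phi_1$-invariance of the induced density $\rho$ independently of the chosen flow-box atlas; an analogous coordinate-freeness check is needed for the atom set $A$ in the atomic branch. Once these foundations are secured, the two applications of $\phi_1$-ergodicity close the argument.
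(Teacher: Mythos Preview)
Your dichotomy between the atomic and non-atomic disintegration is correct and matches the paper's split, and your treatment of the atomic branch is essentially the same as theirs. The genuine gap is in the continuous branch, at the sentence ``Writing $\mu_{\mathcal F(x)} = \rho(x)\,dt$ in the arc-length parameter\ldots''. An atomless measure on an orbit need not be absolutely continuous with respect to the flow-time Lebesgue measure $dt$: for an aperiodic orbit, identified with $\mathbb R$, the measure $\sum_{n\in\mathbb Z}\tau_n\nu$ with $\nu$ the Cantor measure on $[0,1]$ is atomless, $\phi_1$-invariant (translation by $1$), and singular to $dt$. So you cannot simply produce a density $\rho$; and if you try to define $\rho(x)$ as a lower or upper derivative $\liminf_{\varepsilon\to0}\mu^1_x(B_{\mathcal F}(x,\varepsilon))/\varepsilon$, ergodicity only tells you this $\phi_1$-invariant function is constant $\mu$-a.e.---but that constant could be $0$ or $+\infty$, which is exactly what happens in the singular-continuous scenario. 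Nothing in your outline rules this out.

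This is not a side issue: it is the core of the paper's proof. Their substitute for your one-line density argument is the machinery of the leafwise distortions $\overline{\Delta}(x),\underline{\Delta}(x)$ together with the counting Lemmas~\ref{lemma:aux}--\ref{lemma:aux2}. Ergodicity makes $\overline{\Delta},\underline{\Delta}$ constant, and then a delicate covering/packing argument on a single orbit (using that the $\varepsilon_k$-balls around an equispaced grid tile $B_{\mathcal F}[x,1]$ with total renormalisation weight exactly $2l$) forces $\overline{\Delta}=\underline{\Delta}=1$, which is precisely the statement that the derivative of $\mu^1_x$ with respect to $dt$ exists and equals a constant. A separate argument (Case~2 in the paper) is needed to exclude the possibility that the ``good'' set $\overline{\Pi}$ fails to be dense in almost every orbit; they show this would force atoms to reappear. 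Your proposal would be complete if you either supplied an independent reason why the singular-continuous case cannot occur, or reproduced an argument equivalent to the distortion/counting lemmas; as written, the step from ``atomless'' to ``has a density'' is unjustified.
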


 In \cite{Ambrose, AmbroseKakutani} W. Ambrose and S. Kakutani proved a remarkable representation theorem for measurable flows which can be summarized as follows: every measurable  measure preserving flow on a Lebesgue space is isomorphic to a flow built under a function (see \cite{AmbroseKakutani} for definition). This result was latter strengthened and extended to larger classes of measurable flows (e.g. non-singular flows) by D. Rudolph \cite{Rudolph}, S. Dani \cite{Dani}, U. Krengel \cite{Krengel, Krengel2} and I. Kubo \cite{Kubo}. In \cite{Wagh} V. Wagh gave a descriptive version of Ambrose-Kakutani's theorem and more recently D. McClendon \cite{McClendon} proved a version of Ambrose-Kakutani's theorem for Borel countable-to-one semi-flows.
 
 \begin{corollary}\label{theorem:A}
Let $\phi$ be a measurable flow defined on a Lebesgue space $X$ and $\mathcal F(x)$ be the $\phi-$orbit of the point $x$. Then, given any $\phi_1-$ergodic invariant measure either
\begin{enumerate}
\item  there is a set $A \subset X$ of full $\mu$-measure such that $\mathcal F(x) \cap A$ is a discrete subset of $\mathcal F(x)$. 
Moreover, there is a natural number $k\geq 1$ such that $\mathcal F(x) \cap A$ is the $\phi_1-$orbit of exactly $k$ points; or
\item for $\mu$-almost every $x\in X$ there is a measure $\mu_{\mathcal F(x)}$ on $\mathcal F(x)$ such that
% given any arc $[x,\phi_t(x)] \subset \{\phi_t(x):t \in \mathbb R\}$ we have 
\[ \mu_{\mathcal F(x)}(\phi([0,t] \times \{x\})) = 2^{-1}t, \]
as long as $\tau \mapsto \phi(\tau,x)$ is injective on $[0,t]$, $t\geq 0$, and where $\mu_{\mathcal F(x)}$ is a $\phi_1-$invariant measure which normalized and restricted on a foliated chart of the orbit is a disintegration of $\mu$. 
%And $\mu_y=\mu_x$ if $y \in \mathcal F(x)$.
% $[\mu_x]$ such that $\mu^1_x((T_{-1}(x), T_1(x)))=1$.
\end{enumerate}
 \end{corollary}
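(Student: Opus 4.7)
The plan is to reduce Corollary~\ref{theorem:A} to the continuous flow case already settled by Theorem~\ref{theo:continuous.flow} by means of an Ambrose--Kakutani-type representation. That is, I would produce a Borel isomorphism $\Phi \colon (X,\phi) \to (Y,\tilde\phi)$ between the given measurable flow and a continuous flow $\tilde\phi$ on a separable metric space $Y$, one that sends $\phi$-orbits bijectively to $\tilde\phi$-orbits and conjugates $\phi_t$ to $\tilde\phi_t$ for every $t\in\mathbb{R}$. Once such a $\Phi$ is in hand, the push-forward $\tilde\mu := \Phi_*\mu$ is an ergodic Borel measure for $\tilde\phi_1$, Theorem~\ref{theo:continuous.flow} applied to $(Y,\tilde\phi,\tilde\mu)$ yields the desired dichotomy, and pulling the conclusion back through $\Phi$ gives the corollary.

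For the first step I would invoke a descriptive version of Ambrose--Kakutani. The classical statement in \cite{AmbroseKakutani} requires the flow to preserve $\mu$, which is precisely what we wish to prove in case~(2) and hence cannot be assumed a priori. Fortunately, the descriptive/Borel refinements --- such as Wagh~\cite{Wagh} for flows and McClendon~\cite{McClendon} for Borel countable-to-one semi-flows --- produce the desired isomorphism purely from the measurability and orbit structure of $\phi$, representing it as a flow built under a Borel ceiling function over a Borel cross-section of the $\phi_1$-action. A flow built under a function admits a canonical continuous realization as the vertical translation on the region $\{(z,s)\colon 0\le s < f(z)\}$ modulo the ceiling identification, and this realization lives on a separable metric space; therefore Theorem~\ref{theo:continuous.flow} applies.

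The conclusion then transfers through $\Phi$ essentially by inspection. Case~(1) --- the existence of a full-measure Borel set $A$ whose intersection with every $\phi$-orbit is discrete and coincides with a single $\phi_1$-orbit of exactly $k$ points --- is a purely set-theoretic property of the orbit foliation and is preserved by any orbit-preserving Borel isomorphism. In case~(2), the transverse measures $\tilde\mu_{\tilde{\mathcal F}(\Phi(x))}$ pull back to measures $\mu_{\mathcal F(x)}$ along the $\phi$-orbits; the parametrization identity $\mu_{\mathcal F(x)}(\phi([0,t]\times\{x\})) = 2^{-1}t$ is transported because $\Phi$ intertwines the two flows, and the fact that the normalized restriction of $\mu_{\mathcal F(x)}$ to a foliated chart is a disintegration of $\mu$ follows from the invariance of disintegrations under Borel isomorphisms of standard probability spaces.

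The main obstacle is the first step: one must ensure that a Borel flow isomorphism to a continuous flow on a separable metric space exists for arbitrary measurable flows on Lebesgue spaces, with no a priori assumption of $\phi$-invariance of $\mu$. The descriptive versions of Ambrose--Kakutani cited above provide this, but some care is needed to check that the cross-section and ceiling function can be chosen so that the representation space is genuinely separable and metric (so Theorem~\ref{theo:continuous.flow} applies verbatim) and that $\Phi$ is defined on a $\mu$-conull Borel subset of $X$; these are standard points, but they require explicit verification.
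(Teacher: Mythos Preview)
Your reduction strategy --- represent the measurable flow as a continuous flow via an Ambrose--Kakutani type theorem and then invoke Theorem~\ref{theo:continuous.flow} --- is exactly the route the paper takes; the paper's entire proof is the one-line remark that the corollary ``follows as a direct consequence of the classical Ambrose--Kakutani Theorem'' (Theorem~\ref{theorem:ambrose.kakutani}).

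The one place where you diverge is in worrying that the classical Ambrose--Kakutani theorem cannot be applied because it requires $\mu$ to be $\phi$-invariant, which you regard as part of the conclusion. In the paper's conventions this concern evaporates: Section~\ref{subsec:Mflows} defines a \emph{flow} on a Lebesgue space $(X,\mathcal B,\mu)$ to be a one-parameter group of \emph{measure-preserving} transformations, so a ``measurable flow'' in the hypothesis of the corollary already preserves $\mu$. (Note, consistently, that the clause ``In particular $\mu$ is invariant for the flow'' present in case~(2) of Theorem~\ref{theo:continuous.flow} is omitted from the corollary.) Thus classical Ambrose--Kakutani applies directly with respect to $\mu$, and your detour through the descriptive/Borel versions of Wagh~\cite{Wagh} and McClendon~\cite{McClendon} is unnecessary here --- though it would indeed be the correct tool if one wished to extend the statement to Borel flows with no a priori invariant measure.
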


This corollay follows as a direct consequence of the classical Ambrose-Kakutani's Theorem (see Theorem ~\ref{theorem:ambrose.kakutani} from Subsection ~\ref{subsec:Mflows}).

\section{Preliminaries on measure theory}\label{sec:preliminaries}

\subsection{Measurable partitions and Rohklin's Theorem}

Let $(X, \mu, \mathcal B)$ be a probability space, where $X$ is a compact metric space, $\mu$ a probability measure and $\mathcal B$ the Borelian $\sigma$-algebra of $X$.
Given a partition $\mathcal P$ of $X$ by measurable sets, we associate the probability space $(\mathcal P, \widetilde \mu, \widetilde{\mathcal B})$ by the following way. Let $\pi:X \rightarrow \mathcal P$ be the canonical projection, that is, $\pi$ maps a point $x$ of $X$ to the partition element of $\mathcal P$ that contains it. Then we define $\widetilde \mu := \pi_* \mu$ and 
% $ \widetilde{\mathcal B}:= \pi_*\mathcal B$, therefore 
 $\widetilde B \in \widetilde{\mathcal B}$ if and only if $\pi^{-1}(\widetilde B) \in \mathcal B$.

\begin{definition} \label{definition:conditionalmeasure}
 Given a partition $\mathcal P$. A family $\{\mu_P\}_{P \in \mathcal P} $ is a \textit{system of conditional measures} for $\mu$ (with respect to $\mathcal P$) if
\begin{itemize}
 \item[i)] given $\phi \in C^0(X)$, then $P \mapsto \int \phi \mu_P$ is measurable;
\item[ii)] $\mu_P(P)=1$ $\widetilde \mu$-a.e.;
\item[iii)] if $\phi \in C^0(X)$, then $\displaystyle{ \int_X \phi d\mu = \int_{\mathcal P}\left(\int_P \phi d\mu_P \right)d\widetilde \mu }$.
\end{itemize}
\end{definition}

When it is clear which partition we are referring to, we say that the family $\{\mu_P\}$ \textit{disintegrates} the measure $\mu$ or that it is the \textit{disintegration of $\mu$ along $\mathcal P$}.  

\begin{proposition} \label{prop:uniqueness} \cite{EW, Ro52} 
 Given a partition $\mathcal P$, if $\{\mu_P\}$ and $\{\nu_P\}$ are conditional measures that disintegrate $\mu$ on $\mathcal P$, then $\mu_P = \nu_P$ $\widetilde \mu$-a.e.
\end{proposition}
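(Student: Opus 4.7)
The plan is to reduce the equality of two disintegrations to the equality of two probability measures on a countable generating family of Borel sets, using that conditional measures are determined by integrating against saturated test functions.

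First, I would upgrade property iii) of Definition \ref{definition:conditionalmeasure} from continuous test functions to bounded Borel ones. The collection $\mathcal H$ of bounded Borel $f : X \to \mathbb{R}$ for which $P \mapsto \int f\,d\mu_P$ is $\widetilde{\mathcal B}$-measurable and
\[
\int_X f\,d\mu = \int_{\mathcal P} \left(\int_P f\,d\mu_P\right) d\widetilde{\mu}
\]
holds is a vector space of bounded functions containing the constants and closed under bounded monotone convergence (measurability passes to pointwise limits, and monotone convergence is applied twice in the identity). Since $\mathcal H$ contains the multiplicatively closed family $C^0(X)$ by properties i) and iii), the functional monotone class theorem yields $\mathcal H \supset$ all bounded Borel functions; the same reasoning applies to $\{\nu_P\}$. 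Applying this extension to $f = \chi_{A \cap \pi^{-1}(\widetilde B)}$ for an arbitrary Borel $A \subset X$ and $\widetilde B \in \widetilde{\mathcal B}$, and using property ii) together with the fact that $P \cap \pi^{-1}(\widetilde B)$ equals $P$ or $\emptyset$ according to whether $P \in \widetilde B$, one obtains $\int f\,d\mu_P = \chi_{\widetilde B}(P)\,\mu_P(A)$, and likewise for $\nu_P$. Consequently
\[
\int_{\widetilde B} \mu_P(A)\,d\widetilde{\mu}(P) \;=\; \mu(A \cap \pi^{-1}(\widetilde B)) \;=\; \int_{\widetilde B} \nu_P(A)\,d\widetilde{\mu}(P),
\]
and since $\widetilde B$ is arbitrary, $\mu_P(A) = \nu_P(A)$ holds for $\widetilde{\mu}$-almost every $P$, with the exceptional null set a priori depending on $A$.

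To remove this dependence on $A$, I would exploit the separability of $X$ and fix a countable algebra $\mathcal A$ of Borel sets generating $\mathcal B$ (for instance, finite unions of open balls with rational radii centered at a countable dense set). The countable union of the null sets obtained above produces a single $\widetilde{\mu}$-null set $N \subset \mathcal P$ such that $\mu_P(A) = \nu_P(A)$ for every $A \in \mathcal A$ whenever $P \notin N$. The uniqueness part of Carath\'eodory's extension theorem (equivalently, a Dynkin $\pi$-$\lambda$ argument) then forces $\mu_P = \nu_P$ on all of $\mathcal B$ for every $P \notin N$, which is the conclusion.

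The main technical obstacle is the first step: property i) of Definition \ref{definition:conditionalmeasure} only supplies measurability of $P \mapsto \int f\,d\mu_P$ for $f \in C^0(X)$, so both the measurability and the integral identity in iii) must be simultaneously extended to bounded Borel test functions, which is the substantive content of the monotone class argument. Once that extension is in place, the rest of the proof is a standard countable-exhaustion and Carath\'eodory routine that uses only separability of the underlying space.
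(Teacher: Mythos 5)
The paper does not give a proof of this proposition at all; it simply cites \cite{EW, Ro52}, so there is no in-text argument to compare yours against. That said, your proof is correct and is essentially the standard textbook argument: upgrade the disintegration identity from $C^0(X)$ to bounded Borel functions by a functional monotone class argument, identify $P\mapsto\mu_P(A)$ (for fixed Borel $A$) $\widetilde\mu$-a.e.\ via the family of identities $\int_{\widetilde B}\mu_P(A)\,d\widetilde\mu=\mu(A\cap\pi^{-1}(\widetilde B))$, and then pass from ``a.e.\ for each $A$'' to ``for a.e.\ $P$, for all $A$'' by a countable generating $\pi$-system plus a Dynkin/Carath\'eodory uniqueness step. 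The details you flag (property ii) used to reduce $\mu_P(A\cap\pi^{-1}(\widetilde B))$ to $\chi_{\widetilde B}(P)\mu_P(A)$, and separability of $X$ to produce a countable algebra) are exactly the points that need care, and you handle them correctly; the only tiny gloss is that property ii) holds only $\widetilde\mu$-a.e., so the pointwise identity $\int f\,d\mu_P=\chi_{\widetilde B}(P)\mu_P(A)$ holds off a null set, which should be absorbed into the final exceptional set $N$ --- harmless, but worth saying.

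One remark on economy: since $X$ is compact metric, $C^0(X)$ is separable, so one can avoid the monotone class upgrade entirely. Taking a countable dense set $\{\phi_n\}\subset C^0(X)$, property iii) forces $\int\phi_n\,d\mu_P=\int\phi_n\,d\nu_P$ for $\widetilde\mu$-a.e.\ $P$ (both sides being versions of the conditional expectation $E(\phi_n\mid\pi^{-1}\widetilde{\mathcal B})$), a countable union gives a single null set, and density plus the Riesz representation theorem then yields $\mu_P=\nu_P$ off that null set. This is the route typically taken in \cite{EW}; it reaches the same conclusion with slightly less machinery. Your version has the compensating advantage of proving, in passing, that the disintegration identity extends to all bounded Borel test functions, which is often needed downstream.
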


%\begin{corollary} \label{cor:same.disintegration}
% If $T:M \rightarrow M$ preserves a probability $\mu$ and the partition $\mathcal P$, then  $T_*\mu_P = \mu_{T(P)}, \widetilde \mu$-a.e.
%\end{corollary}
%\begin{proof}
% It follows from the fact that $\{T_*\mu_P\}_{P \in \mathcal P}$ is also a disintegration of $\mu$.
%\end{proof}

\begin{definition} \label{def:mensurable.partition}
We say that a partition $\mathcal P$ is measurable (or countably generated) with respect to $\mu$ if there exist a measurable family $\{A_i\}_{i \in \mathbb N}$ and a measurable set $F$ of full measure such that 
if $B \in \mathcal P$, then there exists a sequence $\{B_i\}$, depending on $B$, where $B_i \in \{A_i, A_i^c \}$ such that $B \cap F = \bigcap_i B_i \cap F$.
\end{definition}

\begin{theorem}[Rokhlin's disintegration \cite{Ro52}] \label{theo:rokhlin} 
 Let $\mathcal P$ be a measurable partition of a compact metric space $X$ and $\mu$ a Borel probability. Then there exists a disintegration of $\mu$ along $\mathcal P$.
\end{theorem}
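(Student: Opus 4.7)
The plan is to locally disintegrate $\mu$ along orbit pieces inside flow boxes by Rokhlin's theorem, use uniqueness of the disintegration together with the $\phi_1$-invariance of $\mu$ to glue the local conditional measures into a $\phi_1$-invariant Radon measure $\mu_{\mathcal F(x)}$ on each $\mu$-typical orbit, and then extract the dichotomy from $\phi_1$-ergodicity of $\mu$.

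If the set of $\phi$-fixed points has positive $\mu$-measure then ergodicity forces $\mu$ to be a Dirac mass at a fixed point, which is case~(1) with $k=1$. Otherwise, for $\mu$-a.e.\ $x$ one can build a flow box $U_x = \phi((-\varepsilon,\varepsilon)\times \Sigma_x)$ with $\Sigma_x$ a Borel transversal; the partition of $U_x$ into orbit segments $P_y = \phi((-\varepsilon,\varepsilon)\times\{y\})$ is measurable in the sense of Definition~\ref{def:mensurable.partition}, so Rokhlin's theorem (Theorem~\ref{theo:rokhlin}) yields conditional probabilities $\mu_y^{U_x}$. Using Proposition~\ref{prop:uniqueness} together with the $\phi_1$-invariance of $\mu$ and the fact that $\phi_1$ carries orbit segments to orbit segments, these local measures must agree on overlaps and under $\phi_1$, and so patch into a $\phi_1$-invariant Radon measure $\mu_{\mathcal F(x)}$ on each $\mu$-typical orbit whose normalized restriction to any flow box recovers the Rokhlin conditional there.

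To extract the dichotomy, let $A := \{x \in X : \mu_{\mathcal F(x)}(\{x\}) > 0\}$. This set is Borel and $\phi_1$-invariant, so ergodicity gives $\mu(A) \in \{0,1\}$. If $\mu(A)=1$, the function $m(x) := \mu_{\mathcal F(x)}(\{x\})$ is $\phi_1$-invariant and positive $\mu$-a.e., hence constant equal to some $c > 0$; since every local $\mu_y^{U_x}$ has total mass $1$, exactly $k := 1/c$ atoms lie in each flow box, and the $\phi_1$-invariance of $\mu_{\mathcal F(x)}$ organizes them into exactly $k$ disjoint $\phi_1$-orbits inside $\mathcal F(x)$, yielding case~(1).

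If $\mu(A)=0$, the conditional $\mu_{\mathcal F(x)}$ is atomless and $\mathbb Z$-invariant in the flow parametrization $\tau \mapsto \phi(\tau, x)$; I would show the stabilizer $R := \{t \in \mathbb R : (\phi_t)_*\mu = \mu\}$, a closed subgroup of $\mathbb R$ containing $\mathbb Z$, must equal all of $\mathbb R$. Otherwise $R = (1/\ell)\mathbb Z$ for some $\ell \geq 1$, and the fold-down of $\mu_{\mathcal F(x)}$ to $[0, 1/\ell)$ is a finite Borel measure not proportional to Lebesgue; picking a Borel cross-section of the orbit relation (available since $X$ is separable and $\phi$ continuous), one constructs a measurable ``phase modulo $1/\ell$'' function that is $\phi_1$-invariant (because $1 \in (1/\ell)\mathbb Z$), and using the non-proportionality one extracts a $\phi_1$-invariant Borel set of $\mu$-measure strictly between $0$ and $1$, contradicting ergodicity. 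Hence $R = \mathbb R$, so $\mu$ is $\phi$-invariant and $\mu_{\mathcal F(x)}$ is translation invariant on $\mathcal F(x) \simeq \mathbb R$, i.e.\ a constant multiple of Lebesgue; normalizing against the flow-box disintegration yields the density $t/2$ formula of case~(2). The principal obstacle I anticipate is exactly this ruling out of $R \subsetneq \mathbb R$ in the atomless case --- building a sufficiently measurable phase function and controlling the normalizations so that the non-Lebesgue fold-down genuinely produces a set of intermediate $\mu$-measure --- while the coherence step that glues local Rokhlin disintegrations across overlapping and $\phi_1$-translated flow boxes is technical but routine given Proposition~\ref{prop:uniqueness}.
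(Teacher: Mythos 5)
Your proposal does not address the stated theorem at all. The statement you were asked to prove is Rokhlin's disintegration theorem: that a measurable partition of a compact metric space with a Borel probability admits a system of conditional measures. The paper does not prove this; it cites it from \cite{Ro52} and uses it as an input. What you have written instead is a sketch of a proof of the paper's \emph{main} result, Theorem~\ref{theo:continuous.flow}, and in fact your very first step invokes Rokhlin's theorem (Theorem~\ref{theo:rokhlin}) as a black box to disintegrate $\mu$ on flow boxes. Nothing in your argument engages with what Rokhlin's theorem actually asserts: you never use the countable generating family $\{A_i\}$ of Definition~\ref{def:mensurable.partition}, never construct conditional measures (e.g.\ by a refining-partition or martingale argument), and never verify conditions (i)--(iii) of Definition~\ref{definition:conditionalmeasure}. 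As a proof of the stated theorem the proposal is therefore circular and incomplete in a way that cannot be patched within its own framework.

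Even read charitably as a sketch of Theorem~\ref{theo:continuous.flow}, your route diverges from the paper's. You propose to analyze the stabilizer subgroup $R = \{t\in\mathbb R : (\phi_t)_*\mu = \mu\}$ and rule out $R=(1/\ell)\mathbb Z$ via a measurable ``phase modulo $1/\ell$'' function built from a Borel cross-section; the paper instead introduces the leafwise measure distortion (Definition~\ref{defi:distortion2}), uses ergodicity to extract a uniform sequence $\varepsilon_k$ along which the distortion ratios converge for a.e.\ point (Lemmas~\ref{lema:sequenciaboa} and~\ref{lema:sequenciaboa2}), proves that the set of uniformity is closed in each plaque (Lemma~\ref{lemma:closed}), and runs a counting argument (Lemmas~\ref{lemma:aux} and~\ref{lemma:aux2}) to pin down $\overline{\Delta}=\underline{\Delta}=1$ and conclude $\mu^1_x = 2^{-1}\lambda_{\mathcal F(x)}$ (Lemmas~\ref{lemma:uniformDelta} and~\ref{lemma:equallebesgue}). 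Your sketch neither justifies the existence of a Borel cross-section with the required measurability (this is precisely the sort of difficulty the paper routes around via the fibered space $X_1^{\mathcal F}$) nor shows how non-proportionality of the fold-down produces an invariant set of intermediate measure, and you yourself flag these as the anticipated obstacles; as written they remain unfilled gaps.
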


\subsection{Souslin Theory} 

We list some basic properties of Souslin sets. All the results cited here can be found in  \cite[Chapter $6$]{BogachevII}.

\begin{definition}
Given a Hausdorff space $X$, a subset $A \subset X$ is called Souslin if it is the image of a complete separable metric space under a continuous mapping. We say that the Hausdorff space $X$ is a Souslin space if it is a Souslin set. By convention, we define the empty set to be a Souslin set.
\end{definition}

Observe that by definition, if $X$ and $Y$ are Hausdorff spaces and $A \subset X$ is Souslin, then given any continuous function $f:X \rightarrow Y$, the image $f(A) \subset Y$ is Souslin. Given Souslin spaces $X$ and $Y$, the product $X\times Y$ is a Souslin space and the images of a Souslin set $A\subset X\times Y$ by the projections $\pi_1:X\times Y \rightarrow X , \pi_2:X\times Y \rightarrow Y$ are Souslin sets. Notice that the image of a Borel sets even by a well behaved continuous function such as the projection may not be a Borel set. In fact this was result of a classical mistake committed by Lebesgue \cite{Lebesgue} and corrected by Souslin \cite{Souslin}.

% as the projection does  do not have these properties, the projection of a Borel set may not be a Borel setminus
% One of the advantages of working with Souslin sets instead of Borel sets in our setting is that the previous property is not true for Borel sets, that is, the image of a Borel set by a projection is not necessarily Borel.

%(\cite[Corollary $6.6.7$]{BogachevII}) (\cite[Theorem $6.7.3$]{BogachevII})
Every Borel subset of a Souslin space is itself a Souslin space. Also, Souslin sets of Souslin spaces are preserved under Borel maps, that is, given $X$ and $Y$ Souslin spaces, $f:X\rightarrow Y$ a Borel map and $A\subset X$, $B\subset Y$ Souslin sets, then $f(A)$ and $f^{-1}(B)$ are Souslin sets. Although the complement of a Souslin set may not be a Souslin set, if the base space is Hausdorff and the complement of a Souslin set is a Souslin set it turns out that the original set is in fact a Borel set. Another interesting property of Souslin sets is that they are universally measurable sets.

% It is not hard to show (see Lemma $6.6.5$, Theorem $6.6.6$ and Corollary $6.6.7$ of \cite{BogachevII}) that every Borel subset of a Souslin space is itself a Souslin space. Also, Souslin sets of Souslin spaces are preserved under Borel maps (Theorem $6.7.3$ in \cite{BogachevII}) , that is, given $X$ and $Y$ Souslin spaces, $f:X\rightarrow Y$ a Borel map and $A\subset X$, $B\subset Y$ Souslin sets, then $f(A)$ and $f^{-1}(C)$ are Souslin sets. 
% Souslin sets are universally measurable. \textcolor{red}{Procurar ref. no boga. pra isso}

\subsection{Measurable flows} \label{subsec:Mflows}
Let $(X,\mathcal B, \mu)$ be a Lebesgue space. 
%A Borel map $T:X\rightarrow X$ is a nonsingular transformation of $X$ if for any $A \in \mathcal B$, $\mu(T^{-1}(A))=0 \Leftrightarrow \mu(A)=0$. If $\mu(A) = \mu(T^{-1](A))$ for all $A\in \mathcal B$ then we say that $T$ is measure-preserving or that $\mu$ is $T$-invariant. Clearly any measure-preserving transformation $T:X\rightarrow X$ is non-singular. 
In this section we briefly recall some basic notions on the structure of measurable flows and we refer the reader to \cite{Ambrose,AmbroseKakutani} for more on the subject.

\begin{definition}
A flow $\phi$ in $X$ is a one-parameter group $\{\phi_t\}$ , $-\infty < t <+\infty$, of measure preserving transformations $\phi_t :X \rightarrow X$. If $x\in X$ and $\phi$ is a flow on $X$ we say that the set $\{\phi_t(x): t\in \mathbb R\}$ is the trajectory of $x$ or the orbit of $x$ by the flow.
A flow $\phi : \mathbb R \times X \rightarrow X$ is said to be measurable if $\phi$ is a measurable function, that is, for any measurable set $Y \subset X$ the set $\{(x,t) : \phi_t(x) \in Y\}$ is a measurable set in the product space $\mathbb R \times X$ where the measure is the product of the Lebesgue measure on $\mathbb R$ and $\mu$.  
\end{definition}

Two flows $\phi = (\phi_t)_{t\in \mathbb R}$ on the Lebesgue space $(X,\mathcal B, \mu)$  and $\psi = (\psi_t)_{t\in \mathbb R}$ on the Lebesgue space $(Y,\mathcal C,\nu)$ are said to be \textit{isomorphic} if there exist invariant full measure sets $X_0 \subset X$, $Y_0 \subset Y$ and an invertible measure preserving transformation $\rho:X_0 \rightarrow Y_0$ such that
\[ \rho \circ \phi_t = \psi_t \circ \rho\]
for all $t\in \mathbb R$.

The following classical result of Ambrose-Kakutani shows that measure preserving flows on Lebesgue spaces can be represented as continuous flows on metric spaces.

\begin{theorem}[Ambrose-Kakutani \cite{AmbroseKakutani}] \label{theorem:ambrose.kakutani}
Let $\{\phi_t\}$ be a measure preserving measurable flow defined on a Lebesgue space $(X,\mathcal B, \mu)$. Then $\{\phi_t\}$ is isomorphic to a continuous flow on a separable metric space $M$ endowed with a measure $\lambda$ such that 
\begin{itemize}
\item[1)] every open set has positive $\lambda$-measure;
\item[2)] $\lambda$ is a regular measure.
\end{itemize}
\end{theorem}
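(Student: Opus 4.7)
The statement is the classical Ambrose-Kakutani representation theorem, so my plan is to follow the well-known ``flow under a function'' strategy, whose heart is the construction of a measurable cross-section. First I would produce a \emph{cross-section} $Y\subset X$ with the following properties: $\mu$-almost every orbit meets $Y$ in a countable, biinfinite, discrete set of times, and the ``first return'' time function $f:Y\to(0,\infty)$ is measurable with $\int_Y f\,d\nu<\infty$ for a suitable transverse measure $\nu$. Ambrose's original trick is to fix a set $B$ of positive measure together with a small $\varepsilon>0$, look at the function $g_B(x)=\inf\{t>0:\phi_t(x)\in B\}$, and cut out of $B$ the set $Y_\varepsilon$ of points whose forward and backward return times are both at least $\varepsilon$; using Fubini against the product measure $dt\times d\mu$ and the measurability of the flow map, one shows that such $Y_\varepsilon$ can be chosen measurable and that the union over $\varepsilon\to 0$ sweeps out an invariant full-measure set. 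Some care is needed to handle fixed points and short periodic orbits, but these form an invariant set that can be peeled off and treated separately.

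Once the cross-section and return-time function $f$ are in hand, I would form the ``sky\-scraper'' space
\[
M_0 \;=\; \{(y,s)\in Y\times\mathbb R : 0\le s < f(y)\}
\]
and define the isomorphism $\rho:X_0\to M_0$ by sending a point $x$ to the pair $(y,s)$ where $y$ is the unique backward-closest return of $x$ to $Y$ and $s$ is the elapsed time. The flow on $M_0$ is then the usual flow under the ceiling function $f$: points move upward at unit speed and jump from $(y,f(y)^-)$ to $(T y,0)$ where $T$ is the induced map on $Y$. The measure $\mu$ pushes forward to $\nu\otimes\mathrm{Leb}$ restricted to $M_0$ (after normalization), and invariance is built into the construction because vertical translation preserves $ds$ and because $T$ preserves $\nu$ by Kac's lemma/Poincar\'e recurrence.

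The next step is to give $M_0$ a topology making the flow continuous and metric separable. Here I would use a countable generating algebra $\{A_n\}$ of the Lebesgue $\sigma$-algebra of $Y$, weighted by $2^{-n}$, to metrize $Y$ as a separable metric space in such a way that the chosen measurable sets become clopen up to null sets; then endow $M_0\subset Y\times\mathbb R$ with the product metric. One must adjust $Y$ on a null set so that $f$ becomes lower semicontinuous (or even continuous) after removing a meager-but-null exceptional set, so that the identification $(y,f(y)^-)\sim(Ty,0)$ yields a Hausdorff quotient; this is the delicate metrization step. Finally I would complete and enlarge the resulting space slightly if necessary (e.g.\ by taking a metric completion and extending $\lambda$ by regularization) to guarantee properties (1) and (2): positivity on open sets is arranged by quotienting out the support of $\lambda$, and regularity follows because separable metric spaces are Radon spaces for any finite Borel measure.

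The main obstacle is unquestionably the first step, the rigorous construction of a measurable cross-section meeting a.e.\ orbit discretely with integrable return time; all standard proofs (Ambrose, Kakutani, Rudolph) spend most of their effort here, invoking Lusin-type measurable selection and Fubini on the orbit equivalence relation. Once that is secured, the representation as a flow under a function and the metrization are essentially bookkeeping. Since the present paper only needs Ambrose-Kakutani as a black box to deduce Corollary \ref{theorem:A} from Theorem \ref{theo:continuous.flow}, I would simply cite \cite{Ambrose,AmbroseKakutani} for the detailed execution and emphasize that the two listed properties of $\lambda$ are automatic consequences of the above metrization, not additional hypotheses needing separate proof.
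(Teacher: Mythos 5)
The paper gives no proof of Theorem~\ref{theorem:ambrose.kakutani}; it states the result and cites \cite{AmbroseKakutani}, treating it as a black box used only to derive Corollary~\ref{theorem:A} from Theorem~\ref{theo:continuous.flow}. Your outline of the classical argument --- measurable cross-section with integrable return time, flow built under the ceiling function, metrization of the base so the suspension becomes a continuous flow on a separable metric space --- is the correct shape, and your closing observation that one should simply cite \cite{Ambrose, AmbroseKakutani} for the details matches exactly what the paper itself does.

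Two small slips in your metrization paragraph are worth flagging should you ever expand the sketch. To guarantee that every nonempty open set has positive $\lambda$-measure you should \emph{restrict} to the topological support of $\lambda$, not ``quotient out the support,'' which describes the opposite operation. And the suggestion to finish by taking a metric completion is counterproductive: completion can reintroduce nonempty open sets of $\lambda$-measure zero, undoing the restriction-to-support step; the construction should produce the separable metric space directly. Regularity of $\lambda$ is then automatic, since every finite Borel measure on a metric space is regular, so no ``extension by regularization'' is needed. Neither slip is fatal, but both would have to be repaired in a complete write-up.
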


%Classifications of measurable flows was also obtained for other authors generalizing Theorem \ref{theorem:ambrose.kakutani} to other contexts \cite{Kubo, Dani, Krengel, Krengel2}. For the case of non-singular flows (see \cite{Kubo} for definitions) on Lebesgue spaces we can summarize the results of I. Kubo and U. Krengel in the following theorem.
%
%\begin{theorem}\cite{Kubo, Krengel, Krengel2}
%Let $\{\phi_t\}$ be a free non-singular flow on a Lebesgue space $(X,\mathcal B,\mu)$. Then it is isomorphic to a flow built under a function $\varphi$ which takes at most two values.
%\end{theorem}

\subsection{Measurable choice}
We finish this preliminary section with a result by R. J. Aumann \cite{Aumann}, which although comes from the Decision Theory in Economics, lies in the realm of measure theory. This result will be used in the study of some atomic case.
% , as it has been used in \cite{PTV2}.

\begin{theorem}[Measurable Choice Theorem \cite{Aumann}] \label{theo:MCT}
Let $(T,\mu)$ be a $\sigma$-finite measure space, let $S$ be a Lebesgue space, and let $G$ be a measurable subset of $T\times S$ whose projection on $T$ is all of $T$. Then there is a measurable function $g:T\rightarrow S$, such that $(t,g(t)) \in G$ for almost all $t \in T$.
\end{theorem}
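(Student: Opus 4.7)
The plan is to invoke the Souslin-set machinery reviewed earlier together with a measurable selection argument in the style of Jankov--von Neumann. A naive attempt, setting $g(t)=\inf\{s:(t,s)\in G\}$ after identifying $S$ with $[0,1]$, does yield a $\mu$-measurable function, since its sublevel sets are countable intersections of projections of measurable sets, and such projections are Souslin hence universally measurable. The difficulty is that $\inf G_t$ need not lie in $G_t$ (take $G=T\times(0,1]$), so this pointwise infimum is not by itself a selector.

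My first step is to put $S$ in a standard form. Being a Lebesgue space, $S$ decomposes into a countable atomic part and an atomless part measurably isomorphic to $([0,1],\mathcal{B},\lambda)$. Treating atoms by direct enumeration, I may assume $S=[0,1]$ with its Borel structure, and $G\subset T\times[0,1]$ measurable with $\pi_T(G)=T$.

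Next, I would build approximating selectors by a dyadic cascade. For each $n\ge 1$ and $0\le k<2^n$, let $I_{n,k}=[k2^{-n},(k+1)2^{-n})$ and $T_{n,k}=\pi_T\bigl(G\cap(T\times I_{n,k})\bigr)$; the latter is Souslin, hence universally measurable, hence $\mu$-measurable after completion. Taking $k_n(t)=\min\{k:t\in T_{n,k}\}$ and $g_n(t)=k_n(t)2^{-n}$ produces a non-decreasing sequence of $\mu$-measurable functions whose pointwise limit is $\inf G_t$. The idea is then to refine the cascade along a Souslin-scheme representation of $G$: at each level one chooses not only a dyadic sub-interval of $[0,1]$ but also an integer index in a fixed Souslin scheme witnessing membership in $G$. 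The joint sequence of choices converges in the Baire space $\mathbb{N}^{\mathbb{N}}$, and by the defining property of the scheme the limit pins down a genuine point $s\in G_t$, giving a selector rather than merely a fibrewise infimum.

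The main obstacle is carrying out this scheme-selection in a measurable way, so that the resulting $g$ is $\mu$-measurable rather than only defined pointwise, and so that $(t,g(t))\in G$ holds for $\mu$-almost every $t$. This is precisely the content of the Jankov--von Neumann measurable selection theorem, whose proof is essentially the Souslin cascade just outlined. Once a universally measurable selector $\tilde g\colon T\to S$ is obtained, universal measurability upgrades to $\mu$-measurability for the $\sigma$-finite $\mu$ (after completing the $\sigma$-algebra), and modifying on a $\mu$-null set if necessary yields the desired measurable $g$ with $(t,g(t))\in G$ for almost every $t\in T$. The genuine difficulty is the non-attainment of fibrewise infima; the Souslin formalism reviewed above is exactly what allows us to bypass it by working at the level of defining schemes, where membership of the limit in $G$ is automatic.
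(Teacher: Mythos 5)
The paper does not prove this statement; it quotes it from Aumann's 1969 paper as a black-box preliminary, so there is no in-text proof to compare against. Your proposal is therefore a standalone proof attempt for a cited result.

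The Jankov--von Neumann/Souslin-scheme strategy you outline is the right kind of argument, but it has a concrete gap: you treat $T$ as if it carried a Polish topology. Phrases such as ``$T_{n,k}$ is Souslin, hence universally measurable'' and ``a fixed Souslin scheme witnessing membership in $G$'' presuppose that $G$ is an analytic subset of a product of Polish spaces. Here $T$ is only an abstract $\sigma$-finite measure space with a $\sigma$-algebra $\mathcal T$; it has no topology, so $G\subset T\times S$ does not literally admit a Souslin scheme over closed sets, and the Baire-space convergence step is not well-posed as stated. The missing reduction --- which is the crux of Aumann's own argument --- is this: since $G$ lies in the product $\sigma$-algebra $\mathcal T\otimes\mathcal B(S)$, there is a countably generated sub-$\sigma$-algebra $\mathcal T_0\subset\mathcal T$, say generated by $\{A_i\}_{i\ge 1}$, with $G\in\mathcal T_0\otimes\mathcal B(S)$. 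The map $\phi\colon T\to\{0,1\}^{\mathbb N}$, $\phi(t)=(\mathbf{1}_{A_i}(t))_{i\ge1}$, is $\mathcal T$-measurable and satisfies $G=(\phi\times\mathrm{id})^{-1}(\widetilde G)$ for a genuine Borel set $\widetilde G\subset\{0,1\}^{\mathbb N}\times S$. On that Polish product your dyadic/Souslin cascade and the Jankov--von Neumann theorem apply verbatim, yielding a universally measurable selector $h$ on the analytic set $\pi_1(\widetilde G)\supset\phi(T)$; then $g=h\circ\phi$ is $\mu$-measurable (after completing $\mathcal T$) and $(t,g(t))\in G$ for $\mu$-a.e.\ $t$. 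Without inserting this factorization step, the scheme-selection has no meaning for abstract $T$. (One also needs to read ``measurable subset of $T\times S$'' as $\mathcal T\otimes\mathcal B(S)$-measurable rather than merely measurable in the $\mu\otimes\lambda$-completion for the countable-generation step to go through directly.)
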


\section{Fibered spaces and disintegration} \label{sec:FiberedSpaces}
Given a continuous foliation $\mathcal F$ of a non-atomic Lebesgue probability space $X$, it is useful to look at $\mathcal F$ as fibers over a certain base space. It is not true that we can always choose a measurable set intersecting each plaque $\mathcal F(x)$ in exactly one point (the simplest example being the irrational linear foliation on the $2$-torus), so the quotient space $X/ \mathcal F$ is not always a good candidate for a base of a fibered space. In the light of this observation, instead of taking the quotient by the plaques we construct a fibered-type space over $X$ by literally attaching over each $x\in X$ the plaque $\mathcal F(x)$.

\begin{definition}\label{defi:fibered.type.space}
Given a space $X$ and a family $\mathcal P$ of subsets of $X$. We can construct a natural fibered-type space over $X$ where the fibers are given by the elements of the family $\mathcal P$. More precisely, we define the space
\[X^{\mathcal P} = \bigcup_{x\in X}\{x\}\times \mathcal P(x) \subset X\times X,\]
endowed with the $\sigma$-algebra induced by the product $\sigma$-algebra on $X\times X$.

We call $X^{\mathcal P}$ the $(X,\mathcal P)$-fibered space or simply the $\mathcal P$-fibered space. Each subset $\{x\}\times \mathcal P(x) \subset X^{\mathcal P}$ is called the fiber of $x$ on $X^{\mathcal P}$. 
\end{definition}

Given a continuous foliation $\mathcal F$ of a non-atomic Lebesgue probability space $(X,\mu)$, consider a local chart 
\[\varphi_x:(0,1)\times (0,1)^k \rightarrow U\]
of $\mathcal F$. The partition $\mathcal V = \{ \{x\} \times (0,1)^k \}$ is a measurable partition of $(0,1)\times (0,1)^k$ with respect to any Borel measure $\mu$ due to the separability of $(0,1)^k$. Hence on a local chart the partition given by the segments of leaves $\varphi_x(\{x\} \times (0,1)^k )$ forms a measurable partition on $U$ for any Borel measure on $U$. That means we can always disintegrate a measure on a local chart. Although the partition by the leaves of a foliation is not necessarily a measurable partition the next result allow us to say that the disintegration of a measure is atomic on the leaves, or it is absolutely continuous to Lebesgue on the leaves, since these properties persists independent of the foliated box one uses to disintegrate.

% Take $\nu := (\varphi_x^{-1})_{*}\mu$. By considering the closure $\overline{U}$ with the natural measure $\overline{\nu}$ which has zero measure on the boundary of $U$ we have, by Theorem \ref{theo:rokhlin}, a system of conditional measures $\{ \overline{\nu}_y\}$ where each $ \overline{\nu}_y$ is supported on  $\{y\}\times [0,1]^k$. By taking the standard projection, we have a system of conditional measures $\nu_y$ for $\nu$ with respect to the partition $\mathcal V$. By the definition of $\nu$ it follows that $\mu_{y}:=(\varphi_x)_*\nu_{\varphi_x^{-1}(y)} $ is a probability measure supported on the plaque $\mathcal F(x) \cap \varphi_x(U)$, where $y\in U$, and $\{\mu_{y}\}$ is a system of conditional measures for $\mu$ with respect to the partition $\{\mathcal F(x) \cap U\}$ of $U$. \tco{red}{RV: pra que fizemos isso?????} \textcolor{blue}{Para poder falar de uma medida desintegrada em uma folheacao - mas na verdade precisamos restringir o contexto}

\begin{proposition} \label{prop:disintegration.unbounded}
If $U_1$ and $U_2$ are described by the local charts $\varphi_{x_1}$ and $\varphi_{x_2}$ of $\mathcal F$ respectively, then the conditional measures $\mu_x^{U_1}$ and $\mu_x^{U_2}$, of $\mu$ on $U_1$ and $U_2$ respectively, coincide up to a constant on $U_1 \cap U_2$.
\end{proposition}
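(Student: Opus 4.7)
The plan is to reduce the statement to the uniqueness of Rokhlin's disintegration (Proposition~\ref{prop:uniqueness}) applied on the intersection $V := U_1 \cap U_2$. I would build two candidate disintegrations of $\mu|_V$, one from $\{\mu_x^{U_1}\}$ and one from $\{\mu_x^{U_2}\}$, show that both disintegrate the same measure along the same measurable partition of $V$, and then conclude from uniqueness that the restricted conditional measures agree pointwise, up to the required multiplicative constant.

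First I would describe the common partition on $V$. For $x\in V$ let $C(x)$ denote the connected component through $x$ of the $U_1$-plaque of $x$ intersected with $V$. A short argument using the local product structure of foliated charts shows that $C(x)$ also coincides with the connected component through $x$ of the $U_2$-plaque of $x$ intersected with $V$, since both are equal to the connected component of $\mathcal F(x)\cap V$ through $x$. Hence $\mathcal P := \{C(x):x\in V\}$ is an intrinsic partition of $V$, and by the separability argument already used for a single chart before the statement, $\mathcal P$ is measurable in the sense of Definition~\ref{def:mensurable.partition}.

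Next, for $i\in\{1,2\}$ I would set
\[
\nu_x^{(i)} := \frac{\mu_x^{U_i}|_{C(x)}}{\mu_x^{U_i}(C(x))},
\]
and verify that $\{\nu_x^{(i)}\}_{x\in V}$ is a disintegration of $\mu|_V$ (after normalizing to a probability) along $\mathcal P$. Properties (i) and (ii) of Definition~\ref{definition:conditionalmeasure} follow directly from the corresponding properties for $\{\mu_x^{U_i}\}$. For property (iii), I would apply the disintegration of $\mu|_{U_i}$ to test functions of the form $\chi_V\cdot \phi$ and split each integral over a $U_i$-plaque-piece $L\cap V$ into the disjoint sum over its connected components, recognizing each inner piece as $\mu_x^{U_i}(C(x))\int \phi\, d\nu_x^{(i)}$. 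Once both families are seen to disintegrate the same measure along the same partition, Proposition~\ref{prop:uniqueness} yields $\nu_x^{(1)}=\nu_x^{(2)}$ for $\mu$-a.e.\ $x\in V$, which gives $\mu_x^{U_1}=(\mu_x^{U_1}(C(x))/\mu_x^{U_2}(C(x)))\cdot\mu_x^{U_2}$ on $C(x)$, the claimed agreement up to a multiplicative constant.

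The main obstacle is bookkeeping rather than any deep measure-theoretic input: one must check carefully that $\mathcal P$ is a measurable partition and that passing from the coarser $\mathcal P_i|_V$ to the finer $\mathcal P$ via connected components preserves the disintegration identity. A related conceptual subtlety is that $\mu_x^{U_1}|_V$ and $\mu_x^{U_2}|_V$ themselves cannot in general agree up to a single constant — the $U_1$-plaque of $x$ may re-enter $V$ through several different $U_2$-plaques — so the comparison is meaningful only after restricting to the connected component $C(x)$ through $x$, which is how the statement should be read locally.
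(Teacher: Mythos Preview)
Your approach is correct and is precisely the standard argument that underlies the result the paper invokes; the paper itself gives no proof at all, merely citing \cite[Proposition~5.17]{El.pisa}. Your reduction to Rokhlin uniqueness on the common refinement by connected components, together with your closing remark that the comparison is only meaningful component-by-component, is exactly the right reading of the statement. The only small point worth making explicit in a full write-up is that $\mu_x^{U_i}(C(x))>0$ for $\mu$-a.e.\ $x\in V$ (needed for the normalization), which follows from a saturation argument: the exceptional set is $\mathcal P$-saturated and each plaque meets it in a countable union of null components.
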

\begin{proof}
It follows from \cite[Proposition 5.17]{El.pisa}.
\end{proof}

\begin{definition}
We say that a probability $\mu$ has atomic disintegration with respect to a foliation if its conditional measures on any foliated box are sum of Dirac measures.
\end{definition}

\begin{remark}\label{remark:class.measures}
Consider the classical volume preserving Kronecker irrational flow on the the torus $\mathbb T^2$. Let $\mathcal F$ be the continuous foliation given by the orbits of this flow, it follows that this is not a measurable partition in the sense of Definition \ref{def:mensurable.partition}. Hence, we cannot apply the Rohklin's disintegration Theorem \ref{theo:rokhlin} even on the apparently well-behaved continuous foliations. But we may always disintegrate locally and compare two local disintegrations by the above result. The proposition above implies that we can talk about disintegration of a measure over a foliation even if it does not form a measurable partition, as long as we have in mind that for a disintegration we understand that on a plaque there is a class of conditional measures which differ up to a multiplication of a constant, we denote this system of conditional measures as $\{[\mu_x]\}$. More precisely for a given foliation $\mathcal F$ on each plaque $\mathcal F(x)$ there is a family of measures $[\mu_x]$ defined on $\mathcal F(x)$ such that if $\eta \in [\mu_x]$ then $\mu_x = \alpha \eta$ for some positive constant $\alpha \in \mathbb R$. And on a foliated box if one normalizes these measures they form a disintegration of the measure $\mu$ in this foliated box. 
\end{remark}

\subsection{Disintegration on the unitary fibered space}
In this section the foliation $\mathcal F$ comes from the orbits of a continuous flow $\phi$ on a separable metric space $X$. Hence $\mathcal F(x)$ is the orbit of $x$ through the flow $\phi$. Denote $B_{\mathcal F}(x,r):=\phi((-r,r)\times \{x\})$ and consider the family of sets \[\mathcal F^1 = \{ \mathcal F^1(x):= \{x\} \times B_{\mathcal F}(x,1)\}_x.\] 
For convenience, denote by $X_1^{\mathcal F}$ the $(X,\mathcal F^1)-$fibered space, that is,
 \[X_1^{\mathcal F} = \bigcup_{x \in X}\mathcal F^1(x).\]

\begin{lemma} \label{lemma:measurable.for.unbounded}
The partition of $X_1^{\mathcal F}$ by the verticals $\mathcal F^1(x)$ is a measurable partition with respect to any measure on $X_1^\mathcal{F}$.
\end{lemma}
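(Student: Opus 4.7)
The plan is to exploit the separability of $X$ together with the observation that the partition of $X_1^{\mathcal F}$ by verticals is literally the partition into fibers of the first-coordinate projection. More precisely, with $\pi_1 : X_1^{\mathcal F} \to X$ defined by $(x,z)\mapsto x$, one has $\mathcal F^1(x) = \pi_1^{-1}(\{x\})$; so once one produces a countable family of measurable sets in $X_1^{\mathcal F}$ whose pullbacks separate points of $X$, this family will serve as the $\{A_i\}$ required by Definition \ref{def:mensurable.partition}.

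Concretely, I would fix a countable base $\{U_n\}_{n\in \mathbb N}$ for the topology of $X$, which exists because $X$ is separable metric. Then define
\[
A_n := (U_n\times X)\cap X_1^{\mathcal F},
\]
which is measurable in $X_1^{\mathcal F}$ since the $\sigma$-algebra on $X_1^{\mathcal F}$ is the one induced by the product Borel $\sigma$-algebra on $X\times X$ and $U_n\times X$ is open there. For each $x\in X$, set $B_n(x) := A_n$ when $x\in U_n$ and $B_n(x) := A_n^c$ when $x\notin U_n$. The key claim is then
\[
\mathcal F^1(x) \;=\; \bigcap_{n\in \mathbb N} B_n(x).
\]
Indeed, a point $(y,z)\in X_1^{\mathcal F}$ lies in $\bigcap_n B_n(x)$ precisely when $y$ belongs to exactly the same basic open sets as $x$; since $X$ is Hausdorff and $\{U_n\}$ is a base, this forces $y=x$, and hence $(y,z)\in \mathcal F^1(x)$. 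The reverse inclusion is trivial.

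Taking the full-measure set $F$ of Definition \ref{def:mensurable.partition} to be all of $X_1^{\mathcal F}$ then exhibits the partition by verticals as countably generated, independently of any underlying measure (which is exactly what the statement asks for). I do not foresee a real obstacle here: the whole argument hinges on the single observation that the partition elements are fibers of a measurable projection onto a space carrying a countable point-separating family of open sets, and this is immediate from separability. The only minor care needed is to verify that the induced $\sigma$-algebra on $X_1^{\mathcal F}$ contains each $A_n$, which is automatic from its definition as the restriction of the product Borel $\sigma$-algebra.
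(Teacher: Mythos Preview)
Your proof is correct and follows essentially the same approach as the paper: both pick a countable base $\{U_i\}$ of $X$ and use the vertical strips over these open sets to generate the partition. The paper defines $V_i := (U_i \times \mathcal F(U_i)) \cap X_1^{\mathcal F}$ instead of your $A_i := (U_i \times X)\cap X_1^{\mathcal F}$, but since any $(x,y)\in X_1^{\mathcal F}$ with $x\in U_i$ automatically has $y\in \mathcal F(x)\subset \mathcal F(U_i)$, these sets coincide; your formulation is slightly cleaner in that it avoids invoking measurability of the flow saturation.
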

\begin{proof}
Let $\{U_i\} \subset X$ be a countable basis of open sets of $X$. By the definition of $\mathcal F$, the $\mathcal F$ saturation of $U_i$ is given by $ \phi((-\infty,+\infty) \times U_i)$, which is a measurable set since the flow is continuous. Let $V_i:= (U_i \times \mathcal F(U_i)) \cap X_1^{\mathcal F}$. Each $V_i$ is a measurable set in $X^{\mathcal F}_1$. Now, it is easy to see that each fiber can be written as intersection of sets of the countable family of sets $\{V_i\}$ or its complement.

\end{proof}

\begin{proposition} \label{prop:unbounded.measurable}
% Let $\mathcal F$ be a continuous foliation of $(X,\mu)$ which is induced by the orbits of a continuous flow $\phi_t$. 
For each $x\in X$ denote by $\mu^1_x$ the measure on the equivalence class  $[\mu_x]$ (as defined on Remark \ref{remark:class.measures}) such that $\mu^1_x$ is a probability measure when restricted to $B_{\mathcal F}(x,1)$. Then 
\[x\mapsto \mu^1_x\]
is a measurable map, that is, given any measurable set $W\subset X$ the function
\[x\mapsto \mu^1_x(W)\]
is a measurable function.
\end{proposition}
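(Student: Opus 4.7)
My approach is to cover $X$ by a countable family of foliated flow boxes, apply Rokhlin's theorem on each to get measurability of the local disintegration, and then argue that the required normalization factor also depends measurably on the base point. Concretely, using separability of $X$, I would choose a countable open cover $\{V_n\}$ by flow boxes $V_n = \varphi_n((-R_n, R_n) \times T_n)$ with $R_n > 2$ in which the flow acts by translation in the first coordinate. After shrinking each $V_n$ to $U_n$, one can arrange that whenever $x\in U_n$ the orbit segment $B_\mathcal{F}(x,1)$ is entirely contained in the $V_n$-plaque through $x$.

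On each $V_n$, Rokhlin's theorem (Theorem~\ref{theo:rokhlin}) produces a measurable disintegration $\{\mu^{V_n}_y\}_{y \in T_n}$ of $\mu|_{V_n}$ along its plaques, and by Proposition~\ref{prop:disintegration.unbounded} together with uniqueness (Proposition~\ref{prop:uniqueness}), each $\mu^{V_n}_{y(x)}$ is a representative of the class $[\mu_x]$ on the plaque of $x$. Hence, for every Borel $W' \subset P_n(x)$ and $x \in U_n$,
\[
\mu^1_x(W') \;=\; \frac{\mu^{V_n}_{y(x)}(W')}{\mu^{V_n}_{y(x)}\bigl(B_\mathcal{F}(x,1)\bigr)},
\]
where $P_n(x)$ is the plaque of $x$ and $y(x)$ its transverse coordinate. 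Measurability of the numerator $x \mapsto \mu^{V_n}_{y(x)}(W')$ follows from condition (i) of Definition~\ref{definition:conditionalmeasure} (extended from continuous to Borel test functions by a monotone-class argument) combined with continuity of $x \mapsto y(x)$. For a general Borel $W \subset X$, I would partition it along the cover as $W = \bigsqcup_n W'_n$ with $W'_n \subset V_n$, write $\mu^1_x(W) = \sum_n \mu^1_x(W'_n \cap \mathcal{F}(x))$, and when $x$ lies outside $U_n$ while $W'_n\cap\mathcal{F}(x)\neq\emptyset$ transfer to a chart $V_m$ containing the relevant orbit segment, controlling the change of constants via Proposition~\ref{prop:disintegration.unbounded}.

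The main obstacle I anticipate is measurability of the denominator $x \mapsto \mu^{V_n}_{y(x)}(B_\mathcal{F}(x,1))$, since the set $B_\mathcal{F}(x,1)$ itself moves with $x$. To settle this I would introduce the distribution function
\[
F_n(y, s) \;:=\; \mu^{V_n}_y\bigl(\varphi_n((-R_n, s) \times \{y\})\bigr),
\]
which is non-decreasing in $s$ for each fixed $y$ (monotonicity of measures) and Borel measurable in $y$ for each fixed $s$ (Rokhlin applied to the fixed Borel set). A standard fact---that a function non-decreasing in one variable and measurable in the other is jointly Borel---gives joint Borel measurability of $F_n$, and therefore
\[
x \;\mapsto\; F_n(y(x),\,s(x)+1) \;-\; F_n(y(x),\,s(x)-1) \;=\; \mu^{V_n}_{y(x)}\bigl(B_\mathcal{F}(x,1)\bigr)
\]
is Borel measurable. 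Dividing the measurable numerator by the measurable (positive, $\mu$-a.e.) denominator and summing the countable local contributions yields measurability of $x \mapsto \mu^1_x(W)$, finishing the proof.
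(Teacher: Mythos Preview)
Your argument is essentially correct, but it follows a different route from the paper. The paper does not work chart by chart; instead it passes to the auxiliary fibered space
\[
X_1^{\mathcal F}=\bigcup_{x\in X}\{x\}\times B_{\mathcal F}(x,1)\subset X\times X,
\]
shows (Lemma~\ref{lemma:measurable.for.unbounded}) that the partition of $X_1^{\mathcal F}$ by vertical fibers $\{x\}\times B_{\mathcal F}(x,1)$ is measurable, and applies Rokhlin's theorem once to a measure $\widetilde\mu$ on $X_1^{\mathcal F}$ whose fiberwise conditionals are, by uniqueness, exactly the $\mu_x^1$. Measurability of $x\mapsto\mu_x^1(W)$ then drops out of the Rokhlin package directly, with no normalization quotient to track. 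The gain of the paper's approach is that the ``moving set'' $B_{\mathcal F}(x,1)$ is absorbed into the very definition of the fiber, so the issue you isolate as the main obstacle (the $x$-dependence of the denominator) never arises, and no patching across overlapping charts is needed.

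Your approach, by contrast, stays in the original space and handles the moving normalization explicitly via the distribution function $F_n(y,s)$ and the monotone-in-$s$/measurable-in-$y$ joint measurability criterion; this is a clean and self-contained argument that does not rely on the somewhat delicate step of first \emph{defining} $\widetilde\mu$ in terms of the $\mu_x^1$ whose measurability one is trying to establish. The price you pay is the bookkeeping in the last paragraph: for a general $W$ the leaf $\mathcal F(x)$ may meet a given $V_n$ in several plaques, and keeping the change-of-constant factors from Proposition~\ref{prop:disintegration.unbounded} measurable across the countable cover requires a bit more care than you indicate (you should make explicit that each such factor is itself a quotient of two values of $F_m$ and $F_n$ evaluated at measurable functions of $x$). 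With that detail filled in, your proof is a valid and arguably more transparent alternative to the paper's fibered-space argument.
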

\begin{proof}
On the fibered space $X^{\mathcal F}_1$ consider the measure $\widetilde{\mu}$ defined by
$$\widetilde \mu (\widetilde{A}) = \int_X \mu^1_x(\widetilde{A}_x) d\mu(x), $$
 for any measurable set $\widetilde{A} \subset X^{\mathcal F}_1$, where $\widetilde{A}_x = \{y\in B_{\mathcal F}(x,1): (x,y) \in \widetilde{A}\}$.
Since the vertical partition on $X^{\mathcal F}_1$ is a measurable partition by Lemma \ref{lemma:measurable.for.unbounded} the probability measure $\widetilde \mu$ has a Rohklin disintegration along the leaves for which the conditional measures varies measurably on the base point. By uniqueness and by the definition of $\widetilde \mu$ we have that the conditional measure on the plaque $\{x\} \times B_{\mathcal F}(x,1)$ is exactly $\mu^1_x$. By the properties of the Rohklin disintegration it follows that given any measurable set $\widetilde{W} \subset X^{\mathcal F}_1$ we have that $x\mapsto \mu^1_x(\widetilde{W}_x)$ is a measurable function.
Given any measurable set $W\subset X$ let 
\[\widetilde{W}:= \bigcup_{x\in W} \{x\}\times [W\cap B_{\mathcal F}(x,1)].\]
Thus $\widetilde{W}_x = W\cap B_{\mathcal F}(x,1)$ and then we have that
\[x\mapsto \mu^1_x(W \cap B_{\mathcal F}(x,1)) = \mu^1_x(W)\]
is a measurable function on $x$ as we wanted to show.
\end{proof}

%Using the same arguments as for the bounded plaques case we have the following.

\begin{proposition} \label{prop:disinmeasurable2}
For each $r\in (0,\infty)$ the function
\[x\mapsto \mu^1_x(B_{\mathcal F}(x,r))\]
is a measurable function.
\end{proposition}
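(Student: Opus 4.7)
The plan is to separate the argument into the cases $r \leq 1$ and $r > 1$, since the measure $\mu^1_x$ is normalized precisely on $B_{\mathcal F}(x,1)$.

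First, for $r \leq 1$, I would form the set
\[
\widetilde{A}_r := \bigl\{(x,\phi(t,x)) : x \in X,\, t \in (-r,r)\bigr\} \subset X \times X,
\]
which is the image of the separable metric space $X \times (-r,r)$ under the continuous map $(x,t) \mapsto (x,\phi(t,x))$ and therefore a Souslin (hence universally measurable) subset of $X \times X$. Because $r \leq 1$ we have $\widetilde{A}_r \subset X_1^{\mathcal F}$ and the vertical slice $(\widetilde{A}_r)_x$ is exactly $B_{\mathcal F}(x,r)$. The proof of Proposition \ref{prop:unbounded.measurable} actually establishes, as an intermediate step, that for every measurable $\widetilde{W} \subset X_1^{\mathcal F}$ the map $x \mapsto \mu^1_x(\widetilde{W}_x)$ is measurable, as this is exactly the measurability of the Rohklin conditionals of $\widetilde{\mu}$ on $X_1^{\mathcal F}$. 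Applying this to $\widetilde{W} = \widetilde{A}_r$ yields the measurability of $x \mapsto \mu^1_x(B_{\mathcal F}(x,r))$ in the range $r \leq 1$.

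For $r > 1$ I would reduce to an analogous construction after enlarging the normalization radius. Fix an integer $R \geq r$ and introduce the fibered space
\[
X_R^{\mathcal F} := \bigcup_{x \in X} \{x\} \times B_{\mathcal F}(x, R);
\]
the proof of Lemma \ref{lemma:measurable.for.unbounded} adapts verbatim to show that its vertical partition is measurable with respect to any Borel measure on it. Let $\mu^R_x$ be the representative of the class $[\mu_x]$ which is a probability on $B_{\mathcal F}(x,R)$. Running the argument of Proposition \ref{prop:unbounded.measurable} in this enlarged setting gives that $x \mapsto \mu^R_x(\widetilde{W}_x)$ is measurable for every measurable $\widetilde{W} \subset X_R^{\mathcal F}$. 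Applying this to the Souslin sets $\widetilde{A}_r, \widetilde{A}_1 \subset X_R^{\mathcal F}$ shows that both $x \mapsto \mu^R_x(B_{\mathcal F}(x,r))$ and $x \mapsto \mu^R_x(B_{\mathcal F}(x,1))$ are measurable. Since $\mu^1_x$ and $\mu^R_x$ are proportional on the orbit, the normalizations $\mu^1_x(B_{\mathcal F}(x,1)) = 1 = \mu^R_x(B_{\mathcal F}(x,R))$ force
\[
\mu^1_x\bigl(B_{\mathcal F}(x, r)\bigr) = \frac{\mu^R_x\bigl(B_{\mathcal F}(x, r)\bigr)}{\mu^R_x\bigl(B_{\mathcal F}(x, 1)\bigr)},
\]
which is a quotient of measurable functions with strictly positive denominator, hence measurable.

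The main obstacle will be the verification of the analogue of Proposition \ref{prop:unbounded.measurable} in the enlarged space $X_R^{\mathcal F}$: measurability of the vertical partition is immediate from the continuity of $\phi$, but one has to check that the natural candidate measure $\widetilde{\mu}_R$ on $X_R^{\mathcal F}$ is well-defined and that its Rohklin disintegration along vertical fibres coincides $\widetilde{\mu}_R$-a.e.\ with the family $\{\mu^R_x\}$ defined via the equivalence classes $[\mu_x]$. This amounts to a careful re-run of the local-chart-plus-Fubini construction used for $X_1^{\mathcal F}$ with the new normalization radius $R$, after which the rest of the argument is essentially formal.
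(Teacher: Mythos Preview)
Your argument is correct. For $r\le 1$ it coincides with the paper's: both build the subset of $X_1^{\mathcal F}$ whose vertical $x$-slice is $B_{\mathcal F}(x,r)$ and then invoke the measurability of the Rohklin conditionals of $\widetilde\mu$ established inside the proof of Proposition~\ref{prop:unbounded.measurable}. The only difference is that the paper asserts this set is Borel because it is compact (implicitly treating $X$ as compact), whereas you obtain universal measurability through Souslin theory, which is more faithful to the standing hypothesis that $X$ is merely a separable metric space.

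For $r>1$ the two arguments genuinely diverge. The paper does not enlarge the fibered space; instead it records that $x\mapsto \mu_x^1(B_{\mathcal F}(x,r_0))$ is $\phi_1$-invariant for each fixed $r_0\le 1$ and uses this, together with $(\phi_1)_*\mu_x^1=\mu_{\phi_1(x)}^1$, to reach the larger radii from the case $r\le 1$. Your route---pass to $X_R^{\mathcal F}$, rerun Lemma~\ref{lemma:measurable.for.unbounded} and Proposition~\ref{prop:unbounded.measurable} with normalisation radius $R$, and then recover $\mu_x^1(B_{\mathcal F}(x,r))$ as the quotient $\mu_x^R(B_{\mathcal F}(x,r))/\mu_x^R(B_{\mathcal F}(x,1))$---is more self-contained and avoids any bookkeeping with the dynamics, at the price of repeating the construction. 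The ``obstacle'' you flag is not a real one: nothing in the proofs of Lemma~\ref{lemma:measurable.for.unbounded} or Proposition~\ref{prop:unbounded.measurable} uses anything about the number $1$ beyond its being a fixed positive radius, so substituting $R$ throughout is purely cosmetic, and uniqueness of the Rohklin disintegration on $X_R^{\mathcal F}$ identifies the vertical conditionals with $\{\mu_x^R\}$ exactly as before.
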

\begin{proof}
For each fixed $0\leq r \leq 1$, define the following $r$-top function
\[f_r: X \rightarrow X_1^{\mathcal F},  \quad f_r(z) = (z,\phi_r(z)).\]
Let
\[W := \bigcup_{z\in X} [f_{-r}(z),f_r(z)]_z,\]
where $[x, y]_z$ denotes the closed vertical segment connecting $x$ and $y$ on $\mathcal F(z)$. Observe that $W$ is Borel since it is a compact set. By the measurability of $x\mapsto \mu^1_x(W)$ we have that $z\mapsto \mu^1_z(B_{\mathcal F}(z,r))$ is a measurable function. Consequently
\[z\mapsto \mu^1_z(B_{\mathcal F}(z,r))\]
is a measurable function.
Since $x\mapsto \mu_{x}^1(B_{\mathcal F}(x,r_0))$, for every $0\leq r_0 \leq 1$ fixed, is $\phi_1-$invariant we have conclude that for any $r\in (0,\infty)$ the function $x\mapsto \mu_{x}^1(B_{\mathcal F}(x,r))$ is measurable.
\end{proof}

\begin{corollary} \label{cor:jointlymeasurable2}
If $\{[\mu_x]\}$ is a non-atomic system of conditional measures then, for each typical $x\in X$ the function
\[r\mapsto \mu^1_x(B_{\mathcal F}(x,r))\]
is continuous. Furthermore the function
\[(x,r) \mapsto \mu^1_x(B_{\mathcal F}(x,r))\]
is jointly measurable.
\end{corollary}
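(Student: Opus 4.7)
The plan is to handle the two assertions in sequence, each leaning on standard monotone-class style reasoning combined with what has already been established in Propositions \ref{prop:unbounded.measurable} and \ref{prop:disinmeasurable2}.

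First, for the continuity of $r\mapsto \mu^1_x(B_{\mathcal F}(x,r))$, I would start from the observation that $B_{\mathcal F}(x,r)=\phi((-r,r)\times\{x\})$ is an open arc whose family is monotone increasing in $r$, with $\bigcup_{r<r_0}B_{\mathcal F}(x,r)=B_{\mathcal F}(x,r_0)$ and $\bigcap_{r>r_0}B_{\mathcal F}(x,r)=\phi([-r_0,r_0]\times\{x\})=B_{\mathcal F}(x,r_0)\cup\{\phi(r_0,x),\phi(-r_0,x)\}$. Continuity from below is then immediate from continuity of $\mu^1_x$ from below. Continuity from above reduces to showing $\mu^1_x(\{\phi(r_0,x),\phi(-r_0,x)\})=0$, which is precisely non-atomicity applied at two points. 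Thus on the full-measure set where the conditional measures from Proposition \ref{prop:unbounded.measurable} are defined and non-atomic (the ``typical $x$''), the map $r\mapsto \mu^1_x(B_{\mathcal F}(x,r))$ is continuous.

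Second, for joint measurability, I would invoke the standard Carathéodory-type argument: a function on $X\times(0,\infty)$ which is measurable in $x$ for each fixed $r$ and continuous in $r$ for each fixed $x$ is jointly measurable. Concretely, defining $F(x,r):=\mu^1_x(B_{\mathcal F}(x,r))$ and $r_n(r):=2^{-n}\lfloor 2^n r\rfloor$, set
\[F_n(x,r)=\sum_{k\in\mathbb{Z}}F(x,k/2^n)\,\mathbf{1}_{[k/2^n,(k+1)/2^n)}(r).\]
Each $F_n$ is jointly measurable as a countable combination of products of a measurable function of $x$ (by Proposition \ref{prop:disinmeasurable2}) and an indicator of a Borel set in $r$; and by the continuity established above, $F_n(x,r)\to F(x,r)$ pointwise on the typical set. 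Measurability of $F$ then follows as a pointwise limit of measurable functions.

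The only genuinely delicate step is the continuity from the right, because it is the only place where the non-atomicity hypothesis enters in an essential way; everything else is routine measure-theoretic bookkeeping. I would also be careful to phrase ``typical $x$'' as the full-$\mu$-measure subset of $X$ on which the equivalence class $[\mu_x]$ is well-defined via Proposition \ref{prop:unbounded.measurable} and on which the non-atomicity hypothesis holds, so that both statements make sense pointwise there and the limiting argument in the second step is not disturbed by a null set of exceptional $x$.
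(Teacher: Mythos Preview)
Your proposal is correct and follows essentially the same approach as the paper: establish continuity in $r$ via monotonicity of the family $\{B_{\mathcal F}(x,r)\}_r$ together with non-atomicity of $\mu^1_x$, and then deduce joint measurability from the Carath\'eodory-function criterion (measurable in $x$ by Proposition~\ref{prop:disinmeasurable2}, continuous in $r$). The only difference is cosmetic: you spell out the dyadic-approximation proof of the Carath\'eodory lemma, whereas the paper simply cites it.
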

\begin{proof}
Let $x\in X$ be a $\mu$-typical point, hence $\mu^1_x$ is a non-atomic measure on $\mathcal F(x)$. First, let us prove that $r \mapsto \mu^1_x(B_{\mathcal F}(x,r))$ is a continuous function. Let $y_n \in \mathcal F(x)$ and $\varepsilon_n \searrow \varepsilon \in (0,\infty)$, hence  $\mu^1_{x}(B_{\mathcal F}(x,\varepsilon_n)) = \mu^1_x(B_{\mathcal F}(x,\varepsilon)) + \mu^1_x(B_{\mathcal F}(x,\varepsilon_n)\setminus B_{\mathcal F}(x,\varepsilon))$. Because $\mu^1_x$ is nonatomic 
$$\lim_{n\rightarrow \infty}\mu^1_x(B_{\mathcal F}(x,\varepsilon_n)\setminus B_{\mathcal F}(x,\varepsilon)) =0.$$ 
Then,
$$\mu^1_{x}(B_{\mathcal F}(x,\varepsilon_n))  \rightarrow  \mu^1_x(B_{\mathcal F}(x,\varepsilon)).$$
By Proposition \ref{prop:disinmeasurable2} we know that $x \mapsto \mu^1_x(B_{\mathcal F}(x,r))$ is a measurable function, therefore the function $(x,r) \mapsto \mu^1_{x}(B_{\mathcal F}(x,r))$ is a Carath\'eodory function (i.e. measurable in one variable and continuous in the other, see \cite[Definition 4.50]{InfDimAna}), in particular it is a jointly measurable function \cite[Lemma 4.51]{InfDimAna}.
\end{proof}

\subsection{The leafwise measure distortion}

The last concept we will introduce in this section is the concept of leafwise measure distortion.

\begin{definition} \label{defi:distortion2}
Let $(X,\mu)$ be a non-atomic Lebesgue space and $\mathcal F$ be a continuous foliation of $X$ induced by the orbits of a continuous flow $\phi_t$. Let $\{[\mu_{x}]\}$ denote the system of equivalence classes of conditional measures along $\mathcal F$. We define the upper and lower $\mu$-distortion at $x$ respectively by
\[\overline{\Delta(\mu)}(x):= \limsup_{\varepsilon \rightarrow 0}\frac{\mu^1_x(B_{\mathcal F}(x,\varepsilon))}{\varepsilon} ,\quad \underline{\Delta(\mu)}(x):= \liminf_{\varepsilon \rightarrow 0}\frac{\mu^1_x(B_{\mathcal F}(x,\varepsilon))}{\varepsilon},\]
where $\mu^1_x$ is taken to be the measure on the class of $[\mu_{x}]$ which gives weight one to $B_{\mathcal F}(x,1)$.
If the upper and lower distortions at $x$ are equal then we just call it the $\mu$-distortion at $x$ and denote by
\[\Delta(\mu)(x):= \lim_{\varepsilon \rightarrow 0}\frac{\mu^1_x(B_{\mathcal F}(x,\varepsilon))}{\varepsilon}.\]
\end{definition}

\section{Proof of the main result} \label{sec:DDL}

We proceed to the proof of our main result, Theorem \ref{theo:continuous.flow}, but first we provide a sketch of its proof.

\subsection{Sketch of the proof of Theorem \ref{theo:continuous.flow}}
The proof will be made in two steps. The first, and easy case, is the atomic case. The second case, the non-atomic case is the one where the main ideas appear. 

The first observation is that ergodicity implies that the upper (resp. lower) $\mu-$distortion at $x$ is constant almost everywhere. Then, using the $\varphi_1-$invariance of the family $\{B_{\mathcal F}(x,r)\}$ and the ergodicity of the measure, we obtain some uniformity on the upper (resp. lower) $\mu-$distortion in the sense that along a certain sequence $(\varepsilon_k)_k$, $\varepsilon_k \rightarrow 0$, the ratios appearing in Definition \ref{defi:distortion2} converge to the upper (resp. lower) $\mu-$distortion with the same rate for almost every point $x \in X$ . This is proven in Lemmas \ref{lema:sequenciaboa} and  \ref{lema:sequenciaboa2}. Once proven this uniformity of the upper (resp. lower) distortion, we turn our attention to the set of all points $\overline{\Pi}$ (resp. $\underline{\Pi}$) where such uniformity occurs and its topological characteristics when restricted to a plaque. To be more precise, we prove in Lemma \ref{lemma:closed} that the set of points for which the uniforme distortions occurs is closed in each plaque intersecting it.
The last step consists of analyzing the set $D$ of points $x$ for which $\overline{\Pi}$ is dense in $\mathcal F(x)$, that is, $\overline{\Pi}\cap \mathcal F(x)=\mathcal F(x)$. $D$ is $\varphi_1$-invariant thus it has full or zero measure. If it has full measure then the denseness of $\overline{\Pi}$ on the plaques $\mathcal F(x), x\in D$, allows us to extend the uniform upper distortion to every point on the respective plaque (i.e. orbit). Using the uniformity at every point we prove that the upper distortion is a constant times the $\mu_x$ measure of the set $B_{\mathcal F}(x,1)$ on the plaque $\mathcal F(x)$. Applying the same argument for the set $\underline{\Pi}$ where the lower distortion is uniform we get to the same equality and conclude that the upper and lower distortion are equal, thus the limit converges and we actually have a well defined distortion. Using this fact we prove in Lemma \ref{lemma:equallebesgue} that $\mu^1_x$ is a constant times the natural measure induced by the flow on the orbits.
If $D$ has zero measure then almost every plaque has pieces of open intervals in it which are in the complement of the set $\overline{\Pi}$. We use this holes to show that atoms should appear, which yields an absurd.

\subsection{Proof of Theorem \ref{theo:continuous.flow}}

To simplify notation we denote $f:=\phi_1$.

First let us deal with the case where $\mu$ itself has atoms, that is, there is a countable subset $Z\subset X$ such that $\mu(\{z\})>0$ for any $z\in Z$. Since $f$ is ergodic and $Z$ is $f$-invariant we have $\mu(Z)=1$. Hence the second item of the theorem is satisfied. We may now assume the measure $\mu$ itself is atomless.

Let $Per(\phi)$ to be the set of periodic orbits of the flow $\phi$. First let us assume that $\mu(Per(\phi))=0$ and break the proof in two cases (the \textit{atomic case} and the \textit{non-atomic case}). We deal with $\mu(Per(\phi))>0$ by the end of the proof. Also recall that $\mathcal F$ is the foliation whose plaques are the orbits of the flow and  $B_{\mathcal F}(x,r):=\phi((-r,r)\times \{x\})$. 

% First let us deal with the case where $\mu$ itself has atoms, that is, there is a countable subset $Z\subset X$ such that $\mu(\{z\})>0$ for any $z\in Z$. Since $f$ is ergodic and $Z$ is $f$-invariant we have $\mu(Z)=1$ and since the weight of each atom is also an $f$-invariant function we have that there exists $k_0 \in \mathbb N$ such that $Z$ has $s$ elements $a_1,...,a_{s}$ and $\mu(a_i) = 1/s$ for every $1\leq i \leq s$. Consider $\mathcal F_i := \mathcal F(a_i)$. By the invariance of the cardinality of $\mathcal F_i \cap  Z$ and ergodicity of $f$, each $\mathcal F(a_i)$ has exactly the same number of atoms $k_0 \leq s$. Thus we fall in the second case of the statement.

\vspace{0.3cm}
\textbf{The atomic case:} Assume that $\mu$ has atomic disintegration over $\mathcal F$.

Consider the measurable function $g_r: x \mapsto \mu_x^1(B_{\mathcal F}(x,r))$. Now define the weight map
$$w: x \mapsto \mu_x^1(\{x\}).$$
This is a measurable map because $w(x) = \lim_{r \rightarrow 0} g_r(x)$ and pointwise limit of measurable functions is a measurable function.

Now consider the invariant set $w^{-1} ((0,\delta) )$ of atoms whose weight is less then $\delta$. Ergodicity implies that this set has zero or one measure. Thus, there exists a real number $\delta_0>0$ such that each atom has weight $\delta_0$ and, consequently, each plaque has the same number of atoms $k_0 = 1/\delta_0$. 

Hence we have proved statement $(2)$ of Theorem \ref{theo:continuous.flow}.

\vspace{0.3cm}
\textbf{Non-atomic case}: We now assume that the disintegration is not atomic.

Let $\{[\mu_x]\}$, as in Remark \ref{remark:class.measures}, be the equivalence classes of the conditional measures coming from the Rokhlin disintegration of $\mu$ along the leaves of $\mathcal F$.
% Recall that the upper and lower unitary distortion defined on definition \ref{defi:distortion} by $\overline{\Delta}$ and $\underline{\Delta}$ respectively, that is,
%
%\[\overline{\Delta}(x):= \limsup_{\varepsilon \rightarrow 0}\frac{\mu_x(B_{d_x}(x,\varepsilon))}{\varepsilon} ,\quad \underline{\Delta}(x):= \liminf_{\varepsilon \rightarrow 0}\frac{\mu_x(B_{d_x}(x,\varepsilon))}{\varepsilon}. \]
%Recall that $B_{d_x}(x,\varepsilon)$ is the ball inside $\mathcal F(x)$, centered in the point $x$ and with radius $\varepsilon$ with respect to the metric $d_x$.
Observe that $\mu_x^1(B_{\mathcal F}(x,\varepsilon)) >0 $ for every $x \in \operatorname{Supp}_{\mathcal F}(\mu^1_x)$ (where the support here is inside $\mathcal F(x)$). Thus, it makes sense to evaluate the upper and lower unitary distortions. Also observe that, a priori, $\overline{\Delta}(x)$ and $\underline{\Delta}(x)$ could be infinity. In any case these functions are well-known to be measurable functions.
Also note that both $\overline{\Delta}(x)$ and $\underline{\Delta}(x)$ are $f$-invariant maps because
\[f_{*}\mu^1_x = \mu^1_{f(x)} \;\text{ and }\;f(B_{\mathcal F}(x,\varepsilon)) = B_{\mathcal F}(f(x),\varepsilon).\]
%since
%\[d_{f(x)}(f(x),f(y))=d_x(x,y).\] 
By ergodicity of $f$ it follows that both are constant almost everywhere, let us call these constants by $\overline{\Delta}$ and $\underline{\Delta}$. That is, for almost every $x$:
\begin{equation}\label{eq:delta}
\overline{\Delta}(x)  = \overline{\Delta} ,\quad \text{ and } \underline{\Delta}(x)  = \underline{\Delta}.
\end{equation}
Let $D$ be a (full measure) set of points $x$ for which \eqref{eq:delta} occurs.

\begin{lemma} \label{lema:sequenciaboa} 
If $\overline{\Delta}$ is finite, there exists a sequence $\varepsilon_k\rightarrow 0$, as $k\rightarrow +\infty$, and a full measure subset $R \subset D$ such that
\begin{itemize}
\item[i)] $R$ is $f$-invariant;
\item[ii)]for every $x \in R$, then
\begin{equation}\label{eq:uniform}
\left| \frac{ \mu^1_x(B_{\mathcal F}(x,\varepsilon_k))}{\varepsilon_k} - \overline{\Delta}  \right|   \leq \frac{1}{k};\end{equation}
%\item[iii)]on a fiber $\mathcal F(x)$ the set of points which satisfies the inequality from item (ii) forms a closed set inside $\mathcal F(x)$.
\end{itemize}
% $R \cap \mathcal F(x)$ is closed inside $\mathcal F(x)$ for each $x \in R$;
An analogous result holds if instead of $\overline{\Delta}$ we consider $\underline{\Delta}$.
\end{lemma}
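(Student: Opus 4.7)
The plan is to produce the sequence $(\varepsilon_k)$ and the full-measure set $R$ simultaneously: for each $k$ I will isolate a single radius $\varepsilon_k < 1/k$ at which the ratio $\mu^1_x(B_{\mathcal F}(x,\varepsilon_k))/\varepsilon_k$ sits within $1/k$ of $\overline{\Delta}$ for $\mu$-a.e.\ $x$, and then take $R$ to be the intersection of the corresponding sets across $k$.

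First I would define, for each integer $k \geq 1$ and each $\varepsilon > 0$,
\[ E_k(\varepsilon) := \left\{ x \in D : \left| \frac{\mu^1_x(B_{\mathcal F}(x,\varepsilon))}{\varepsilon} - \overline{\Delta} \right| \leq \frac{1}{k}\right\}, \]
which is measurable by Proposition~\ref{prop:disinmeasurable2}. Since $f_{*}\mu^1_x = \mu^1_{f(x)}$ and $f(B_{\mathcal F}(x,\varepsilon)) = B_{\mathcal F}(f(x),\varepsilon)$, I get $\mu^1_{f(x)}(B_{\mathcal F}(f(x),\varepsilon)) = \mu^1_x(B_{\mathcal F}(x,\varepsilon))$, so $E_k(\varepsilon)$ is $f$-invariant. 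Ergodicity then forces $\mu(E_k(\varepsilon)) \in \{0,1\}$.

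The key step, and what I expect to be the main obstacle, is turning the pointwise realization of $\overline{\Delta}$ (along an $x$-dependent sequence) into one scale $\varepsilon_k$ good for a.e.\ $x$ simultaneously. Concretely, for each $k$ I need $\varepsilon_k \in (0,1/k)$ with $\mu(E_k(\varepsilon_k)) = 1$. Since $\overline{\Delta}$ is finite and $\overline{\Delta}(x) = \overline{\Delta}$ on $D$, for each $x \in D$ there is a sequence $\varepsilon_n(x) \searrow 0$ realizing the limsup. Being in the non-atomic case, Corollary~\ref{cor:jointlymeasurable2} guarantees that $r \mapsto \mu^1_x(B_{\mathcal F}(x,r))$ is continuous; so the slice $T_k(x) := \{\varepsilon > 0 : x \in E_k(\varepsilon)\}$ contains an open neighbourhood of each $\varepsilon_n(x)$, and in particular $\mathrm{Leb}(T_k(x) \cap (0,1/k)) > 0$ for $\mu$-a.e.\ $x$. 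The joint measurability of $(x,r) \mapsto \mu^1_x(B_{\mathcal F}(x,r))$ from the same corollary then lets me apply Fubini to get
\[ \int_0^{1/k} \mu(E_k(\varepsilon))\,d\varepsilon \;=\; \int_D \mathrm{Leb}\bigl(T_k(x) \cap (0,1/k)\bigr)\,d\mu(x) \;>\; 0, \]
forcing $\mu(E_k(\varepsilon_k)) > 0$ for some $\varepsilon_k \in (0,1/k)$, which by ergodicity is in fact $1$.

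Finally I would set $R := \bigcap_{k \geq 1} E_k(\varepsilon_k)$. Each factor is $f$-invariant of full measure, so $R$ is $f$-invariant of full measure, and by construction the inequality \eqref{eq:uniform} holds on $R$ for every $k$. The analogous statement for $\underline{\Delta}$ goes through verbatim after replacing the limsup by a liminf in the choice of the sequence $\varepsilon_n(x)$; the continuity and joint measurability from Corollary~\ref{cor:jointlymeasurable2} are what make the Fubini step legitimate in both cases.
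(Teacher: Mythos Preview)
Your argument is correct, but it follows a genuinely different route from the paper. The paper does not fix $\varepsilon$ and look at the level set $E_k(\varepsilon)$; instead it defines an $f$-invariant \emph{function}
\[
\varepsilon_k(x):=\sup\Bigl\{\varepsilon:\Bigl|\tfrac{\mu^1_x(B_{\mathcal F}(x,\varepsilon))}{\varepsilon}-\overline{\Delta}\Bigr|+\varepsilon\le\tfrac{1}{k}\Bigr\},
\]
spends most of the proof checking that this supremum is measurable (via an approximation by strict inequalities $\varepsilon_k^n(x)$ and the Carath\'eodory property coming from Corollary~\ref{cor:jointlymeasurable2}), and then invokes ergodicity to conclude $\varepsilon_k(x)\equiv\varepsilon_k$ a.e. Your approach trades that measurability verification for a Fubini argument: you use the $0$--$1$ dichotomy on each $E_k(\varepsilon)$ directly, and the positivity of $\int_0^{1/k}\mu(E_k(\varepsilon))\,d\varepsilon$ forces some slice to have full measure. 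Both arguments ultimately rest on the same two ingredients (joint measurability and continuity in $r$ from Corollary~\ref{cor:jointlymeasurable2}, plus ergodicity), but yours is shorter and sidesteps the issue of measurability of the sup; the paper's version, on the other hand, singles out a canonical ``last good scale'' $\varepsilon_k$, though that canonicity is never exploited afterwards.
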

%\tco{red}{G:Pro caso n\~ao compacto basta escolher $m_x$ que da peso $1$ para $B(x,1)$}. \tco{red}{RV: Temos s\'o que nos certificar que est\'a tudo certo aquela coisaa de que podemos encontrar classes de medidas que desintegram no nosso caso tamb\'em.}
\begin{proof}
Since $\overline{\Delta}(x) = \overline{\Delta}$ for every $x \in D$ and $k\in \mathbb N^{*}$ define
\[\varepsilon_k(x):= \sup \left\{\varepsilon: \left| \frac{ \mu^1_x(B_{\mathcal F}(x,\varepsilon))}{ \varepsilon} - \overline{\Delta}  \right| +\varepsilon  \leq \frac{1}{k} \right\}.\]
Observe that $\varepsilon_k(x)$ exists because since the $\limsup$ is $\overline{\Delta}$ we can take a sequence $\varepsilon_l(x) \rightarrow 0$ such that the ratio given approaches $\overline{\Delta}$.

\noindent {\bf Claim:}
The function $\varepsilon_k(x)$ is a measurable for all $k \in \mathbb N$.
\begin{proof}
Observe that since $\mu_x$ is not atomic we have
\[\varepsilon_k(x) = \lim_{n\rightarrow \infty} \varepsilon^n_k(x)\]
where
\[\varepsilon^n_k(x) = \sup \left\{\varepsilon: \left| \frac{ \mu_x^1(B_{\mathcal F}(x,\varepsilon))}{ \varepsilon} - \overline{\Delta}  \right|  +\varepsilon < \frac{1}{k} +\frac{1}{n} \right\}.\]
So, it is enough to prove that $\varepsilon_k^n(x)$ is measurable on $x$. 

Define \[g(x,\varepsilon) = \left| \frac{ \mu_x^1(B_{\mathcal F}(x,\varepsilon))}{ \varepsilon} - \overline{\Delta}  \right| + \varepsilon.\]
%Observe that since $\mu_x$ is not atomic, $g(x,\cdot)$ is continuous. Also, since $x\mapsto \mu_x$ is measurable, for any fixed $\varepsilon$ the function $g(\cdot, \varepsilon)$ is \textcolor{red}{measurable.} Thus, $g$ is jointly measurable.
By Corollary \ref{cor:jointlymeasurable2}, for any typical $x\in M$ the function $g(x,\cdot):(0,\infty) \rightarrow \mathbb (0,\infty)$ is continuous.
%be a typical point (that is, $\mu_x$ is not atomic in $\mathcal F(x)$) and lets prove that $g(x,\cdot):(0,\infty) \rightarrow \mathbb (0,\infty)$ is continuous. Let $y_n \in \mathcal F(x)$, $\varepsilon_n\rightarrow \varepsilon \in (0,\infty)$. Because $\mu_x$ is not atomic we have
%\[\mu_{x}(B_{d_x}(x,\varepsilon_n)) \rightarrow  \mu_x(B_{d_x}(x,\varepsilon)) \Rightarrow g(x,\varepsilon_n) \rightarrow g(x,\varepsilon). \]
%Therefore $g(x,\cdot)$ is indeed continuous.  
Let $\varepsilon>0$ be fixed and let us prove that $g(\cdot, \varepsilon):M \rightarrow \mathbb (0,\infty)$ is a measurable function. By Proposition \ref{prop:disinmeasurable2} we know that $x\mapsto  \mu_x^1(B_{\mathcal F}(x,\varepsilon))$ is a measurable function, therefore $g(\cdot, \varepsilon)$ is measurable function.

%Since $\varepsilon$ is fixed, we just need to show that
%$x\mapsto \mu_x(B_{d_x}(x,\varepsilon))$ is \textcolor{red}{measurable} on a full measure set. Take any interval $(a,b) \subset (0,\infty)$.
%
%
%Similar to the argument made on Proposition XXXX, define the roof and floor functions $f$ and $g$ which are measurable functions. Take a sequence of compact sets $K_1, K_2, ...$
%such that $F=\bigcup K_i$ has full measure and $f$ and $g$ are continuous on each $K_i$. On each $K_i$ we have
%\[x\mapsto \mu_x(B_{d_x}(x,\varepsilon))\]
%is continuous thus $ x\mapsto \mu_x(B_{d_x}(x,\varepsilon))$ is measurable on $F$ as we wanted.

%\[W\setminus Y = \bigcup_{(x,y)\in K\cap L} (g(x,y), f(x,y)],\]
%which is a Borel set whose projection has positive measure and, consequently, has positive measure. Consider the set $T = \bigcup f^k(W\seminus Y)$. $T$ is a Borel $f\times f$-invariant set. Consequently $T$ has full measure and full projected measure. Since $x\mapsto \mu_x$ is measurable, we have
%\[x\mapsto \mu_x()\]

%We know that, since $M$ is a Souslin space and $\{d_x\}$ is a Souslin metric system, the set \textcolor{red}{Argument equal to proposition. Must justify properly using trivializations}
%\[W:=\bigcup_{x\in M} \{x\}\times B_{d_x}(x,\varepsilon)\]
%is Souslin. Since $l: x\mapsto \mu_x(W)$ is measurable then $l^{-1}(a,b)) = \{x: \mu_x(B_{d_x}(x,r)) \in (a,b)\}$ is measurable as we wanted.

Given any $k\in \mathbb N$, $k>0$, the continuity of $g(x,\cdot)$ implies that 
\[\varepsilon_k^{-1}((0,\beta))=\{x: \varepsilon_{k}(x) \in (0,\beta)\}  = \bigcap_{r\geq b, r\in \mathbb Q} g(\cdot, r)^{-1}([1/k, +\infty)). \]
Therefore $\varepsilon_k^{-1}((0,\beta))$ is measurable and consequently $\varepsilon_k$ is a measurable function for every $k$.
\end{proof}

Note that $\varepsilon_k(x)$ is $f$-invariant. Thus, by ergodicity, let $R_k$ be a full measure set such that $\varepsilon_k(x)$ is constant equal to $\varepsilon_k$.
It is easy to see that the sequence $\varepsilon_k$ goes to $0$ as $k$ goes to infinity. Take $\widetilde{R}:= \bigcap_{k=1}^{+\infty} R_k$.
Since each $R_k$ has full measure, $\widetilde{R}$ has full measure and clearly satisfies what we want for the sequence $\{\varepsilon_k\}_{k}$.
Finally, take $R = \bigcap_{-\infty}^{+\infty} f^i(\widetilde{R})$. $R$ is $f$-invariant, has full measure and satisfies $(i)$ and $(ii)$. 

\end{proof}

% We define the set $\Pi$ of all points where the uniform convergence condition \eqref{eq:uniform} holds, that is:
Now consider the following set
\[\overline{\Pi} := \bigcup_{x \in R} \overline{\Pi}_x.\] 
where
\[\overline{\Pi}_x:= \left\{y \in \mathcal F(x):\left| \frac{ \mu^1_x(B_{\mathcal F}(y,\varepsilon_k))}{\varepsilon_k} - \overline{\Delta}  \right|   \leq \frac{1}{k}, \forall k\geq 1 \right\},\]
similarly we define $\underline{\Pi}_x$ and $\underline{\Pi}$ with $\underline \Delta$ in the role of $\overline \Delta$.

\begin{lemma} \label{lemma:closed}
For every $x \in R$ the set $\overline{\Pi}_x$ is closed in the plaque $\mathcal F(x)$.
\end{lemma}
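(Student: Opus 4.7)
The plan is to reduce the closedness of $\overline{\Pi}_x$ to a single continuity statement: for every fixed $\varepsilon>0$, the function $y\mapsto \mu^1_x(B_{\mathcal F}(y,\varepsilon))$ is continuous along the leaf $\mathcal F(x)$ (equipped with its intrinsic flow topology). Once this is established, taking any sequence $y_n\in\overline{\Pi}_x$ with $y_n\to y$ in $\mathcal F(x)$ and passing to the limit in the defining inequality gives, for each $k\geq 1$,
\[
\left| \frac{\mu^1_x(B_{\mathcal F}(y,\varepsilon_k))}{\varepsilon_k} - \overline{\Delta} \right| = \lim_{n\to\infty} \left| \frac{\mu^1_x(B_{\mathcal F}(y_n,\varepsilon_k))}{\varepsilon_k} - \overline{\Delta} \right| \leq \frac{1}{k},
\]
which places $y$ in $\overline{\Pi}_x$ and finishes the proof.

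To establish the continuity, I would use the two standing hypotheses of the case at hand. Since we are in the non-atomic case and $\mu(\mathrm{Per}(\phi))=0$, we may assume (after discarding a $\mu$-null $f$-invariant set from $R$ if necessary) that the orbit map $t\mapsto \phi_t(x)$ is injective, so $\mathcal F(x)$ identifies with $\mathbb R$ via this parameterization. Writing $y_n=\phi_{s_n}(x)$ and $y=\phi_s(x)$ with $s_n\to s$, the pullback $\nu$ of $\mu^1_x$ to $\mathbb R$ is a non-atomic locally finite Borel measure and
\[
\mu^1_x(B_{\mathcal F}(y_n,\varepsilon)) = \nu\bigl((s_n-\varepsilon,\,s_n+\varepsilon)\bigr).
\]
The characteristic functions $\chi_{(s_n-\varepsilon,\,s_n+\varepsilon)}$ converge to $\chi_{(s-\varepsilon,\,s+\varepsilon)}$ pointwise everywhere except at the two points $s\pm\varepsilon$, and those two points are $\nu$-null by non-atomicity. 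Dominated convergence, with the eventual dominator $\chi_{(s-2\varepsilon,\,s+2\varepsilon)}$ (which has finite $\nu$-mass), then yields $\nu((s_n-\varepsilon,s_n+\varepsilon))\to \nu((s-\varepsilon,s+\varepsilon))$, i.e. $\mu^1_x(B_{\mathcal F}(y_n,\varepsilon))\to \mu^1_x(B_{\mathcal F}(y,\varepsilon))$, as required.

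The only technical point to watch is the continuity step itself, and it hinges entirely on the two running assumptions: non-periodicity of $\mu$-typical orbits (so the parameterization is bijective and balls $B_{\mathcal F}(y,\varepsilon)$ correspond to genuine open intervals in $\mathbb R$ rather than to multiply-wound images) and non-atomicity of $\mu^1_x$ (so displacing the endpoints of the interval moves only a $\nu$-null amount of mass). Both hold at $\mu$-almost every $x$, so one can arrange them to hold on every $x\in R$ by a standard reduction. Beyond this, the argument is essentially a limit-passage and needs no further ingredients.
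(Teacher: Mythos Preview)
Your proposal is correct and follows essentially the same approach as the paper: both reduce closedness to the continuity of $y\mapsto \mu^1_x(B_{\mathcal F}(y,\varepsilon_k))$ along the leaf, and both derive this continuity from the non-atomicity of $\mu^1_x$ (which forces the boundary $\{\phi_{\pm\varepsilon_k}(y)\}$ to be $\mu^1_x$-null). The only cosmetic difference is packaging: the paper works intrinsically with symmetric differences $B_{\mathcal F}(y_n,\varepsilon_k)\,\Delta\,B_{\mathcal F}(y,\varepsilon_k)$ and a monotone-convergence argument (after passing to a subsequence), whereas you pull back to $\mathbb R$ via the orbit parameterization and invoke dominated convergence---your version is arguably cleaner since it avoids the subsequence step.
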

\begin{proof}
Let $y_n \rightarrow y$, $y_n \in \overline{\Pi}_x$, $y\in \mathcal F(x)$. To prove that $y\in \overline{\Pi}_x$ it is enough to show that 
 \[\lim_{n\rightarrow \infty} \mu_x^1(B_{\mathcal F}(y_n,\varepsilon_k)) = \mu_x^1(B_{\mathcal F}(y,\varepsilon_k)). \]
Given any $k\in \mathbb N$, since $\mu_x$ is not atomic we have that 
\begin{eqnarray*}
 \mu^1_x(\partial B_{\mathcal F}(y,\varepsilon_k)) = \mu^1_x(\phi(-\varepsilon_k,y) \cup \phi(\varepsilon_k,y)) =0
\end{eqnarray*}
and
\begin{eqnarray*}
 \mu^1_x(\partial B_{\mathcal F}(y_n,\varepsilon_k)) = \mu^1_x(\phi(-\varepsilon_k,y_n) \cup \phi(\varepsilon_k,y_n)) =0, \forall n \in \mathbb N,
\end{eqnarray*}
where $\partial B_{\mathcal F}$ denotes the boundary of the set inside the leaf.
% \[\mu^1_x(\partial B_{\mathcal F}(y,\varepsilon_k)) = \mu^1_x(\partial B_{\mathcal F}(y_n,\varepsilon_k)) = 0. \]
% Thus 
%\textcolor{red}{observar argumento do infinito},

Now, let $B_n:=B_{\mathcal F}(y_n,\varepsilon_k) \Delta B_{\mathcal F}(y,\varepsilon_k)$ where $Y\Delta Z$ denotes the symmetric diference of the sets $Y$ and $Z$. Observe that, by passing to a subsequence of $y_n$ if necessary, we have $B_n \supset B_{n+1}$, for every $n \geq 1$. Thus
\begin{align*}
  \lim_{n\rightarrow \infty}  \mu^1_x(B_n) & =  \lim_{n\rightarrow \infty}  \mu^1_x \left(\bigcap_{n} B_n \right) \\
 &=   \lim_{n\rightarrow \infty}  \mu^1_x(\{\phi(-\varepsilon_k, y) , \phi(\varepsilon_k,y) \}) \\
 &=  0.\end{align*}
 Therefore $\lim_{n\rightarrow \infty} \mu_x^1(B_{\mathcal F}(y,\varepsilon_k) \setminus B_{\mathcal F}(y_n,\varepsilon_k)) = \lim_{n\rightarrow \infty} \mu_x^1(B_{\mathcal F}(y_n,\varepsilon_k) \setminus B_{\mathcal F}(y,\varepsilon_k))= 0$ and consequently 
 \[\lim_{n\rightarrow \infty} \mu_x^1(B_{\mathcal F}(y_n,\varepsilon_k)) = \mu_x^1(B_{\mathcal F}(y,\varepsilon_k)), \]
 as we wanted to show.
 
%We now claim that \begin{eqnarray}\label{eq:PI.closed.on.plaque}
%               \lim_{n\rightarrow \infty}    \mu^1_x(B_{\mathcal F}(y_n,\varepsilon_k)) = \mu^1_x( B_{\mathcal F}(y,\varepsilon_k)),
%                  \end{eqnarray}
%(\tco{red}{RV: Gabriel, eu nao sei colocar n a infinito embaixo da seta})
%which implies $y \in \overline{\Pi}_x$. 
%
%
%Equation (\ref{eq:PI.closed.on.plaque}) can be verified by the fact that the set $B_{\mathcal F}(y_n,\varepsilon_k) \Delta B_{\mathcal F}(y,\varepsilon_k)$ (i.e. the symmetric difference of $B_{\mathcal F}(y_n,\varepsilon_k)$ and $B_{\mathcal F}(y,\varepsilon_k)$) are two intervals that tends to points and the non-atomicity implies they have zero measure. (\tco{red}{RV: Gabriel, veja se como eu escrevi deixou bem explicado})
\end{proof}
An analogous result is true for $\overline{\Delta}$.
\begin{lemma} \label{lema:sequenciaboa2}
If $\overline{\Delta}$ is infinity, there exists a sequence $\varepsilon_k\rightarrow 0$, as $k\rightarrow +\infty$, and a full measure subset $R^{\infty} \subset D$ such that
\begin{itemize}
\item[i)] $R^{\infty}$ is $f$-invariant;
%\item[ii)] $R \cap \mathcal F(x)$ is closed inside $\mathcal F(x)$ for each $x \in R$;
\item[ii)]for every $x \in R^{\infty}$ we have 
\begin{equation}\label{eq:infity}
 \frac{\mu^1_x(B_{\mathcal F}(x,\varepsilon_k))}{\varepsilon_k} \geq k .\end{equation}
\end{itemize} 
An analogous result holds if instead of $\overline{\Delta}$ we consider $\underline{\Delta}$.
\end{lemma}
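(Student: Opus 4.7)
\textbf{Proof sketch for Lemma \ref{lema:sequenciaboa2}.} The plan is to mimic the proof of Lemma \ref{lema:sequenciaboa}, replacing the definition of $\varepsilon_k(x)$ with one tailored to the case $\overline{\Delta}=+\infty$. Concretely, for $x\in D$ and each $k\geq 1$, I would set
\[
\varepsilon_k(x) := \sup\left\{\varepsilon \in (0, 1/k]\ :\ \frac{\mu^1_x(B_{\mathcal F}(x,\varepsilon))}{\varepsilon} \geq k \right\}.
\]
Because $\overline{\Delta}(x)=+\infty$ on $D$, the $\limsup$ being infinite produces arbitrarily small $\varepsilon$'s with the ratio arbitrarily large, so the set on the right is nonempty and $\varepsilon_k(x)\in(0,1/k]$. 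By Corollary \ref{cor:jointlymeasurable2}, at $\mu$-typical $x$ the function $\varepsilon \mapsto \mu^1_x(B_{\mathcal F}(x,\varepsilon))/\varepsilon$ is continuous on $(0,\infty)$, so the set in braces is closed in $(0,1/k]$ and the supremum is attained. In particular, the ratio at $\varepsilon_k(x)$ is already at least $k$.

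I would next establish that $x\mapsto \varepsilon_k(x)$ is measurable. As in the proof of Lemma \ref{lema:sequenciaboa}, this is done by writing $\varepsilon_k(x)$ as a limit of auxiliary functions $\varepsilon_k^n(x)$ defined with strict inequalities $>k-1/n$; the sublevel sets of $\varepsilon_k^n$ then reduce, via the continuity in $\varepsilon$, to countable intersections over rationals $r\in\mathbb Q$ of the sets $\{x: \mu^1_x(B_{\mathcal F}(x,r))/r > k-1/n\}$, each of which is measurable by Proposition \ref{prop:disinmeasurable2}. The function $\varepsilon_k$ is $f$-invariant, because $f(B_{\mathcal F}(x,\varepsilon))=B_{\mathcal F}(f(x),\varepsilon)$ and $f_*\mu^1_x=\mu^1_{f(x)}$ give
\[
\frac{\mu^1_{f(x)}(B_{\mathcal F}(f(x),\varepsilon))}{\varepsilon} = \frac{\mu^1_{x}(B_{\mathcal F}(x,\varepsilon))}{\varepsilon},
\]
and hence $\varepsilon_k(f(x))=\varepsilon_k(x)$. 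By ergodicity of $f$, $\varepsilon_k(x)$ is constant equal to some $\varepsilon_k\in (0,1/k]$ on a full measure set $R_k$; the bound $\varepsilon_k\leq 1/k$ forces $\varepsilon_k\to 0$.

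To conclude, I take $\widetilde{R}^{\infty}:=\bigcap_{k\geq 1} R_k$, which has full measure, and then
\[
R^{\infty}:=\bigcap_{i\in \mathbb Z} f^i(\widetilde{R}^{\infty}),
\]
which is $f$-invariant and of full measure. For every $x\in R^{\infty}$ and every $k\geq 1$ we have $\varepsilon_k(x)=\varepsilon_k$, and since the supremum is attained this yields
\[
\frac{\mu^1_x(B_{\mathcal F}(x,\varepsilon_k))}{\varepsilon_k}\geq k,
\]
proving (ii). The analogous statement with $\underline{\Delta}$ in place of $\overline{\Delta}$ is obtained by the exact same argument.

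\textbf{Main obstacle.} The only nontrivial technical step is the measurability of $\varepsilon_k(x)$: the non-strict inequality $\geq k$ appearing in the definition prevents a direct reduction to rationals, and the approximation by strict inequalities together with continuity in $\varepsilon$ is the key move, entirely parallel to the Claim inside the proof of Lemma \ref{lema:sequenciaboa}.
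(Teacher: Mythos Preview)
Your proposal is correct and follows exactly the approach the paper intends: the paper provides no separate proof for Lemma~\ref{lema:sequenciaboa2}, treating it as the direct analogue of Lemma~\ref{lema:sequenciaboa}, and your adaptation of $\varepsilon_k(x)$ to the $\overline{\Delta}=\infty$ case (together with the same measurability, $f$-invariance, and ergodicity steps) is precisely that analogue. The only cosmetic difference is that you force $\varepsilon_k\to 0$ via the constraint $\varepsilon\in(0,1/k]$, whereas the paper's finite-case proof uses the ``$+\varepsilon$'' term; both devices serve the same purpose.
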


Analogously to what we have done for the finite case, define
\[\overline{\Pi}^{\infty}_x:= \left\{y \in \mathcal F(x): \frac{ \mu^1_x(B_{\mathcal F}(x,\varepsilon_k))}{\varepsilon_k} \geq k , \forall k\geq 1\right\},\]
and
\[\overline{\Pi}^{\infty} := \bigcup \overline{\Pi}^{\infty}_x.\]
Similarly we define $\underline{\Pi}^{\infty}_x$ and $\underline{\Pi}^{\infty}$.

\begin{lemma}
If $\overline{\Delta}$ (resp. $\underline{\Delta}$) is infinity then for every $x \in R$ the set $\overline{\Pi}^{\infty}_x$ (resp. $\underline{\Pi}^{\infty}_x$) is closed on the plaque $\mathcal F(x)$.
\end{lemma}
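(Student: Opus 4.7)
The plan is to mirror almost verbatim the argument used for Lemma~\ref{lemma:closed}, since the only topological input was continuity of the measure of balls as their center varies along a plaque, and that continuity came solely from non-atomicity of $\mu^1_x$. The difference is that here, instead of needing to preserve an equality with an error bound $1/k$, we only need to preserve a one-sided inequality ``$\geq k$" under a limit, which is even more straightforward.

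More concretely, I would fix $x \in R^{\infty}$ and a sequence $y_n \to y$ with $y_n \in \overline{\Pi}^{\infty}_x$ and $y \in \mathcal F(x)$. Fix $k \geq 1$; the goal is to show that
\[
\frac{\mu^1_x(B_{\mathcal F}(y,\varepsilon_k))}{\varepsilon_k} \geq k.
\]
Exactly as in the proof of Lemma~\ref{lemma:closed}, non-atomicity of $\mu^1_x$ gives $\mu^1_x(\partial B_{\mathcal F}(y,\varepsilon_k)) = 0$ and $\mu^1_x(\partial B_{\mathcal F}(y_n,\varepsilon_k)) = 0$ for every $n$, because these boundaries consist of at most two flow-iterates. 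Passing to a subsequence so that the symmetric differences $B_n := B_{\mathcal F}(y_n,\varepsilon_k) \,\Delta\, B_{\mathcal F}(y,\varepsilon_k)$ are nested and shrink to $\{\phi(-\varepsilon_k,y),\phi(\varepsilon_k,y)\}$ (a $\mu^1_x$-null set), continuity of measure along decreasing sequences yields $\mu^1_x(B_n) \to 0$, hence
\[
\mu^1_x(B_{\mathcal F}(y_n,\varepsilon_k)) \longrightarrow \mu^1_x(B_{\mathcal F}(y,\varepsilon_k)).
\]

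Finally, since each $y_n \in \overline{\Pi}^{\infty}_x$ gives $\mu^1_x(B_{\mathcal F}(y_n,\varepsilon_k))/\varepsilon_k \geq k$, passing to the limit preserves the non-strict inequality and shows $\mu^1_x(B_{\mathcal F}(y,\varepsilon_k))/\varepsilon_k \geq k$. Since $k$ was arbitrary, $y \in \overline{\Pi}^{\infty}_x$, so $\overline{\Pi}^{\infty}_x$ is closed in $\mathcal F(x)$. The argument for $\underline{\Pi}^{\infty}_x$ is identical with $\underline{\Delta}$ in the role of $\overline{\Delta}$. The only potential obstacle would be if the sequence $\varepsilon_k$ obtained in Lemma~\ref{lema:sequenciaboa2} failed to be independent of $x$, but by the same ergodicity-plus-$f$-invariance reduction used in Lemma~\ref{lema:sequenciaboa} we may assume $\varepsilon_k$ is a deterministic sequence on a full-measure $f$-invariant set, so nothing further is needed.
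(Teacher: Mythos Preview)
Your proof is correct and is exactly what the paper intends: its own proof consists of the single line ``Analogous to the proof of Lemma~\ref{lemma:closed},'' and you have faithfully spelled out that analogy, with the only change being that the two-sided bound $|\,\cdot\,|\le 1/k$ is replaced by the one-sided inequality $\ge k$, which is preserved under limits for the same continuity reason.
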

\begin{proof}
Analogous to the proof of Lemma \ref{lemma:closed}.
\end{proof}

\begin{lemma}\label{lemma:two.sets}
 If $\overline{\Delta}$ is finite, then there are Borel sets $Q$ and $G$ such that
 \begin{itemize}
  \item[i)] $f(Q)=Q$ and $f(G)=G$;
  \item[ii)] $Q \cap G = \emptyset$;
  \item[iii)] $\mu(Q\cup G)=1$;
  \item[iv)] if $x \in Q$, then for $\varepsilon_k$ as in Lemma \ref{lema:sequenciaboa} then
  \[\left| \frac{ \mu_x^1(B_{\mathcal F}(x,\varepsilon_k))}{\varepsilon_k} - \overline{\Delta}  \right|   \leq \frac{1}{k};\]  
  \item[v)] if $x \in G$, then there exists $k_0 \in \mathbb N$ such that 
    \[\left| \frac{ \mu^1_x(B_{\mathcal F}(x,\varepsilon_{k_0}))}{\varepsilon_{k_0}} - \overline{\Delta}  \right|   > \frac{1}{k_0}.\]  
 \end{itemize}
\end{lemma}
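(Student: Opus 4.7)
The plan is to take $Q$ and $G$ to be the sets cut out directly by conditions (iv) and (v). With $\varepsilon_k$ as in Lemma \ref{lema:sequenciaboa}, set
\[ Q := \bigcap_{k=1}^{\infty} Q_k, \qquad Q_k := \left\{ x \in X : \left| \frac{\mu_x^1(B_{\mathcal F}(x,\varepsilon_k))}{\varepsilon_k} - \overline{\Delta} \right| \leq \frac{1}{k} \right\}, \qquad G := X \setminus Q. \]
Property (iv) is then built into the definition of $Q$, and property (v) holds for every $x \in G$ because the negation of ``$\leq 1/k$ for every $k$'' is precisely ``$> 1/k_0$ for some $k_0$''. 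Properties (ii) and (iii) are immediate since $Q$ and $G$ are complementary, and $\mu(Q)=1$ comes for free from $R \subset Q$ via Lemma \ref{lema:sequenciaboa}.

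To verify Borelness, I would invoke Proposition \ref{prop:disinmeasurable2}: for each fixed $k$, the function $x \mapsto \mu_x^1(B_{\mathcal F}(x,\varepsilon_k))$ is Borel measurable, so each $Q_k$ is Borel; thus $Q$ is a countable intersection of Borel sets, and $G = X \setminus Q$ is Borel as well.

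For $f$-invariance, I would reuse the two identities already recorded in the main proof: $f_*\mu_x^1 = \mu^1_{f(x)}$ and $f(B_{\mathcal F}(x,\varepsilon)) = B_{\mathcal F}(f(x),\varepsilon)$. Together these yield
\[ \mu^1_{f(x)}(B_{\mathcal F}(f(x),\varepsilon_k)) = \mu^1_x\bigl(f^{-1}(B_{\mathcal F}(f(x),\varepsilon_k))\bigr) = \mu^1_x(B_{\mathcal F}(x,\varepsilon_k)), \]
so the ratio in the definition of each $Q_k$ is constant along $f$-orbits. Hence every $Q_k$, and therefore $Q$ and $G$, is $f$-invariant; the sharper statement $f(Q)=Q$ (rather than merely $f^{-1}(Q)=Q$) uses invertibility of $f=\phi_1$, which is automatic because $f^{-1}=\phi_{-1}$.

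I do not foresee any serious obstacle here: all the measurability and invariance machinery needed was already built in Section \ref{sec:FiberedSpaces} and in the preceding steps of the proof of Theorem \ref{theo:continuous.flow}. The lemma is essentially a bookkeeping step that packages the dichotomy ``uniform approximation for all $k$'' versus ``failure at some $k_0$'' into two complementary $f$-invariant Borel sets, ready to be fed into the subsequent analysis of the sets $\overline{\Pi}$ and $\overline{\Pi}_x$.
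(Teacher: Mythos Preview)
Your argument is correct and is in fact considerably more streamlined than the paper's own proof. The paper does not simply take $Q$ and $G$ to be the level sets you write down; instead it builds them via Lusin's Theorem. Concretely, for $G$ the paper picks a point in $\overline{\Pi}^c$, applies Lusin to the measurable map $x\mapsto \mu^1_x(B_{\mathcal F}(x,\varepsilon_k))$ to obtain a compact set $G_1$ on which this map is continuous, chooses an open neighbourhood $G_2$ on which the strict inequality of (v) persists, and sets $G:=\bigcup_{n\in\mathbb Z} f^n(G_1\cap G_2)$. For $Q$ it similarly produces a nested sequence of $f$-invariant Borel sets $Q_n$ on which the approximate inequality $|\,\cdot\,|<1/k+1/n$ holds, each of full measure by Lemma~\ref{lema:sequenciaboa}, and takes $Q:=\bigcap_n Q_n$; disjointness of $Q$ and $G$ then follows from the incompatibility of (iv) and (v).

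Your route avoids all of this by observing that Proposition~\ref{prop:disinmeasurable2} already furnishes the Borel measurability of $x\mapsto \mu_x^1(B_{\mathcal F}(x,\varepsilon_k))$, so the sets $Q_k$ are Borel \emph{as defined}, and $f$-invariance follows pointwise from the identities $f_*\mu_x^1=\mu_{f(x)}^1$ and $f(B_{\mathcal F}(x,\varepsilon))=B_{\mathcal F}(f(x),\varepsilon)$ without any need to saturate by $f$-orbits. The Lusin step in the paper appears to be a precaution (perhaps to guarantee genuine Borelness rather than mere $\mu$-measurability of the disintegration map), but within the framework the paper has set up your direct definition already delivers Borel sets, and it makes the complementarity $Q\sqcup G=X$ and the full measure $\mu(Q)=1$ (via $R\subset Q$) completely transparent. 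Nothing is lost for the downstream use of the lemma, since the subsequent definition $D:=\mathcal F(Q)\setminus \mathcal F(G)$ only requires invariant measurable sets carrying properties (iv) and (v).
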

\begin{proof}
Consider $\overline{\Pi}$ as defined above. Take any $x\in \overline{\Pi}^c$, that is, there exists $k\geq 1$ such that
 \[\left| \frac{ \mu^1_x(B_{\mathcal F}(x,\varepsilon_{k}))}{\varepsilon_{k}} - \overline{\Delta}  \right|   > \frac{1}{k}.\]  
By the measurability of $x\mapsto \mu^1_x(B_{\mathcal F}(x,\varepsilon_k))$ proved in Proposition \ref{prop:disinmeasurable2} and Lusin's Theorem we can take a compact set $G_1$ where this function varies continuously. Thus, there exists an open set $G_2$ such that for every $y\in G_2\cap G_1$ we have
 \[\left| \frac{ \mu^1_x(B_{\mathcal F}(x,\varepsilon_{k}))}{\varepsilon_{k}} - \overline{\Delta}  \right|   > \frac{1}{k}.\]  
Define $G = \bigcup_{n\in \mathbb Z}f^n(G_2\cap G_1)$.

Let $x \in \overline{\Pi}$. For each $n\in \mathbb N$ we have
 \[\left| \frac{ \mu^1_x(B_{\mathcal F}(x,\varepsilon_k))}{\varepsilon_k} - \overline{\Delta}  \right| < \frac{1}{k}+\frac{1}{n}.\]
 Using again Proposition \ref{prop:disinmeasurable2}, Lusin's Theorem and the invariance of $\mu_x$ by $f$, we find a sequence of nested Borel sets $ \ldots Q_{n+1} \subset Q_n \subset Q_{n-1} \subset ...\subset Q_1$ such that $f(Q_n)=Q_n$, $n\geq 1$ and for all $y\in Q_n$ we have
 \[\left| \frac{ \mu^1_y(B_{\mathcal F}(y,\varepsilon_k))}{\varepsilon_k} - \overline{\Delta}  \right| < \frac{1}{k}+\frac{1}{n}.\]
By Lemma \ref{lema:sequenciaboa} we have $\mu(Q_n)=1$ for every $n$.
Take $Q:=\bigcap_{n=1}^{\infty}Q_n$. Then $Q$ is an $f$-invariant Borel set and $\mu(Q)=1$.
Therefore $\mu(Q\cup G) = 1$. Also, it is clear that $Q\cap G = \emptyset$ and we conclude the proof of the lemma.
\end{proof}
Consider the following measurable set 
$$D := \mathcal F(Q) \setminus \mathcal F(G).$$
Equivalently
\[D = \{x \in \mathcal F(G \cup Q) : \overline{\Pi}_x \cap \mathcal F(x) = \mathcal F(x)\},\]
that is, $D$ is the set of all points whose plaque is fully inside $\overline{\Pi}_x$.

In the sequel of the proof we will need the following counting lemma.

\begin{lemma} \label{lemma:aux}
Let $r>0$ be a fixed real number and  $x\in D$ an arbitrary point. Let $a_i := \varphi_{2i r}(\varphi_{-1}(x))$ and $b_i := \varphi_{2ir}(x)$ for $i=1,2,..., l$ where $l=\left \lfloor \frac{1}{2}\left(\frac{1}{r}-1\right)\right \rfloor$. Then
\begin{equation}\label{eq:statement}
\sum_{i=1}^{l}\mu_{x}^{1}(B_{\mathcal F}[a_i,1]) + \sum_{i=1}^{l}\mu_{x}^{1}(B_{\mathcal F}[b_i,1]) = 2l.\end{equation}
\end{lemma}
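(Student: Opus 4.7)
My plan rests on the defining feature of $x\in D$: for every $y\in\mathcal F(x)$ the uniform estimate
\[ \left|\frac{\mu^1_x(B_{\mathcal F}(y,\varepsilon_k))}{\varepsilon_k}-\overline{\Delta}\right| \leq \frac{1}{k} \]
from Lemma~\ref{lema:sequenciaboa} holds at all $k\geq 1$ simultaneously. I would use this to evaluate the $\mu^1_x$-measure of each of the unit-radius balls appearing in the sum \emph{exactly}, by a tiling argument along the orbit.

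First, for each $i$, the set $B_{\mathcal F}[a_i,1]$ is a flow-interval on $\mathcal F(x)$ of flow-length $2$. For each $k$ I would partition this interval into $N_k:=\lfloor 1/\varepsilon_k\rfloor$ pairwise disjoint balls $B_{\mathcal F}(y_j,\varepsilon_k)$ whose centers $y_j$ are evenly spaced on the orbit at flow-distance $2\varepsilon_k$, leaving a residual piece of flow-length at most $2\varepsilon_k$. Since $\mu^1_x$ is nonatomic, the finite set of boundary points where adjacent balls meet carries no mass, and the residual piece is itself contained in one extra ball of radius $\varepsilon_k$. Applying the uniform bound at each center $y_j$ and summing yields
\[ N_k\varepsilon_k(\overline{\Delta}-1/k) \leq \mu^1_x(B_{\mathcal F}[a_i,1]) \leq (N_k+1)\varepsilon_k(\overline{\Delta}+1/k). \]
Since $N_k\varepsilon_k\to 1$, both sides tend to $\overline{\Delta}$ as $k\to\infty$, so $\mu^1_x(B_{\mathcal F}[a_i,1])=\overline{\Delta}$.

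The same argument applied to $B_{\mathcal F}[b_i,1]$ gives $\mu^1_x(B_{\mathcal F}[b_i,1])=\overline{\Delta}$, and applied to the normalization ball $B_{\mathcal F}[x,1]$ it gives $\mu^1_x(B_{\mathcal F}[x,1])=\overline{\Delta}$; but the left-hand side equals $1$ by the very definition of $\mu^1_x$, so $\overline{\Delta}=1$. Summing over $i\in\{1,\dots,l\}$ then gives
\[ \sum_{i=1}^l\bigl[\mu^1_x(B_{\mathcal F}[a_i,1])+\mu^1_x(B_{\mathcal F}[b_i,1])\bigr] = 2l\,\overline{\Delta} = 2l, \]
as claimed.

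The step I expect to require most care is the $\varepsilon$-bookkeeping in the tiling: $1/\varepsilon_k$ need not be an integer and $2\varepsilon_k$ need not divide the flow-length $2$ evenly, so the residual gap must be tracked. That is routine, though: the gap of flow-length at most $2\varepsilon_k$ sits inside one additional ball of radius $\varepsilon_k$, whose measure the uniform estimate bounds by $\varepsilon_k(\overline{\Delta}+1/k)=o(1)$; this is where the finiteness of $\overline{\Delta}$ (from Lemma~\ref{lema:sequenciaboa}) is essential.
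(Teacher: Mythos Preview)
Your argument is correct, but it follows a genuinely different route from the paper's. The paper's proof of this lemma is a pure counting argument: it partitions $[\varphi_{-1}(x),x]$ into subintervals $J_0,\ldots,J_{l+1}$ of flow-length $2r$ (plus a remainder), observes that each $B_{\mathcal F}[a_i,1]$ and $B_{\mathcal F}[b_i,1]$ is a union of certain $\varphi_1$-translates of the $J_j$'s, and then counts how many times each translate occurs in the total sum. Using only the $\varphi_1$-invariance of $\mu_x^1$ together with the normalization $\mu_x^1(B_{\mathcal F}[x,1])=1$, every piece is seen to appear exactly $2l$ times, giving the value $2l$. At no point does the paper's argument invoke the uniform distortion estimate or the finiteness of $\overline{\Delta}$; the hypothesis $x\in D$ is not actually used in its proof.

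Your approach, by contrast, feeds the uniform estimate (available at every $y\in\mathcal F(x)$ precisely because $x\in D$) through a tiling of each unit ball by $\varepsilon_k$-balls, computes each summand to equal $\overline{\Delta}$, and then forces $\overline{\Delta}=1$ via the normalization of $\mu_x^1$. This is slick, and as a bonus it proves Lemma~\ref{lemma:uniformDelta} ($\overline{\Delta}=1$) for free. The trade-off is that your argument needs $\overline{\Delta}<\infty$; the paper's combinatorial identity does not, and indeed the paper reuses the very same counting identity immediately after the lemma to rule out $\overline{\Delta}=\infty$. So your proof is fine for the lemma as stated, but if you adopt it you must supply a separate, distortion-free argument for the $\overline{\Delta}=\infty$ case (the paper's partition-and-count works verbatim there).
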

\begin{proof}
To simplify the notation, for $s>0$ we will write $[x,\varphi_s(x)]$ to denote the set $\{\varphi_t(x): 0\leq t \leq s\}$. With this notation we can write
\begin{equation*}
[\varphi_{-1}(x),x] = [\varphi_{-1}(x), a_1] \cup [a_1,a_2] \cup \ldots \cup [a_{l-1}, a_l] \cup [a_l, \varphi_{2(l+1)r-1}(x)] \cup [\varphi_{2(l+1)r-1}(x),x] 
\end{equation*}
Denote $J_0 := [\varphi_{-1}(x), a_1] $, $J_i:=[a_i,a_{i+1}]$ for $1\leq i \leq l-1$, $J_l :=  [a_l, \varphi_{2(l+1)r-1}(x)] $ and $J_{l+1} =  [\varphi_{2(l+1)r-1}(x),x] $. Thus we can rewrite
\begin{equation}\label{eq:part1}
[\varphi_{-1}(x),x] = J_0 \cup \ldots J_{l+1}.
\end{equation}
Now, by applying $\varphi_1$ to \eqref{eq:part1} we can write
\begin{align}\label{eq:part2}
[x,\varphi_1(x)] = & [x, b_1] \cup [b_1,b_2] \cup \ldots \cup [b_{l-1}, b_l] \cup [b_l, \varphi_{2(l+1)r}(x)] \cup [\varphi_{2(l+1)r}(x),\varphi_1(x)] \\
=& \varphi_1(J_0) \cup \ldots \varphi_1(J_{l+1}).
\end{align}
Also as a consequence of \eqref{eq:part1} we can write
\begin{equation} \label{eq:part3}
[\varphi_{-2}(x), \varphi_2(x)] = \varphi_{-1}(J_0) \cup \ldots \cup \varphi_{-1}(J_{l+1}) \cup [\varphi_{-1}(x),x] \cup  [x,\varphi_1(x)] \cup \varphi_{2}(J_0) \cup \ldots \cup \varphi_{2}(J_{l+1}) . \end{equation}

Now, observe that each term involved in the sums on the left side of \eqref{eq:statement} can be written as the sum of the $\mu_x^1-$measure of sets of the forms involved on the equations \eqref{eq:part1}, \eqref{eq:part2} and \eqref{eq:part3}. Lets count how many times each of this sets appears on the left side of \eqref{eq:statement}. 
\begin{itemize}
\item Observe that the set $\varphi_{-1}(J_0)$ is not contained in any of the sets $B_{\mathcal F}[a_i,1]$, $B_{\mathcal F}[b_i,1]$, thus it does not appears on \eqref{eq:statement}. However, the set $\varphi_1(J_0)=[x,\varphi_1(a_1)]$ is contained in all of the sets $B_{\mathcal F}[a_i,1], B_{\mathcal F}[b_i,1]$, thus is appears on $2l$ times on the equation \eqref{eq:statement}. Thus, $\mu_x^1(\varphi_1(J_0))$ appears exactly $2l$ times on \eqref{eq:statement}. 
%Since $\mu_x^1$ is preserved by the time-one map $\varphi_1$, we can say that both, $\mu_x^1(\varphi_{-1}(J_0))$ and $\mu_x^1(\varphi_1(J_0))$ appears exactly $l$ times each.
\item For any $1\leq i \leq l+1$ the set $\varphi_{-1}(J_i)$ appears on each of the terms $B_{\mathcal F}[a_j,1]$, $j=1,...,i$, that is, it appears $i$ times on \eqref{eq:statement}. On the other hand the set $\varphi_1(J_i)$ appears $2l-i$ times as it does not belong only to the sets $B_{\mathcal F}[a_j,1]$, $j=1,...,i$. By the fact that $\varphi_1$ preserves $\mu_x^1$ we know that $\mu_x^1(\varphi_{-1}(J_i)) =\mu_x^1(\varphi_1(J_i))$ and then we can say that $\mu_x^1(\varphi_1(J_i))$ appears exactly $2l$ times on \eqref{eq:statement}
\item By symmetry we can see that the terms $\mu_x^1(J_i)$ also appears exactly $2l$ times each. 
\end{itemize}
Thus we have 
\begin{align*}
\sum_{i=1}^{l}\mu_{x}^{1}(B_{\mathcal F}[a_i,1]) +  \sum_{i=1}^{l}\mu_{x}^{1}(B_{\mathcal F}[b_i,1]) = & \\
= & l \cdot \left(2l\cdot \sum_{i=0}^{l+1} \mu_x^1(J_i) + 2l\cdot \sum_{i=0}^{l+1} \mu_x^1(\varphi_{1}(J_i)) \right) \\
= & 2l \cdot \mu_{x}^1(B_{\mathcal F}[x,1]) = 2l.\end{align*}
as we wanted to show.
\end{proof}

\begin{lemma} \label{lemma:aux2}
Let $r>0$ be a fixed real number and  $x\in D$ an arbitrary point. Let $a_i := \varphi_{2i r}(\varphi_{-1}(x))$ and $b_i := \varphi_{2ir}(x)$ for $i=1,2,..., l+1$ where $l=\left \lfloor \frac{1}{2}\left(\frac{1}{r}-1\right)\right \rfloor$. Then
\begin{equation}\label{eq:statement}
\sum_{i=1}^{l+1}\mu_{x}^{1}(B_{\mathcal F}[a_i,1]) + \sum_{i=1}^{l}\mu_{x}^{1}(B_{\mathcal F}[b_i,1]) = 2l+2.\end{equation}
\end{lemma}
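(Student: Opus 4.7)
The plan is to repeat the multiplicity-counting argument of Lemma~\ref{lemma:aux} with an enlarged landmark set that now includes the extra point $a_{l+1}$. First, I would partition the orbit arc $[\varphi_{-1}(x), \varphi_{2(l+1)r-1}(x)]$ by the landmarks $\varphi_{-1}(x), a_1, \ldots, a_{l+1}$ into sub-arcs $J_0, J_1, \ldots, J_{l+1}$, each of time-length $2r$ (up to the possibly shorter boundary arcs $J_0$ and $J_{l+1}$). Because $b_i = \varphi_1(a_i)$ by construction, the $\varphi_1$-translate of this partition is generated by $x, b_1, \ldots, b_l, \varphi_{2(l+1)r}(x)$.

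Next, I would decompose each ball $B_{\mathcal F}[a_i, 1]$ and $B_{\mathcal F}[b_i, 1]$ as a union of consecutive sub-arcs $J_k$ together with their $\varphi_{\pm 1}$-translates, exactly as in the proof of Lemma~\ref{lemma:aux}, and then count the multiplicity with which each $J_k$ contributes to the left-hand side of \eqref{eq:statement}. Invoking the identity $\mu^1_x(\varphi_{-1}(J_k)) = \mu^1_x(\varphi_1(J_k))$ from the $\varphi_1$-preservation of $\mu^1_x$ (as in Lemma~\ref{lemma:aux}), the symmetric interleaving of the $a_i$'s and $b_i$'s implies each interior $J_k$ is counted $2(l+1)$ times, yielding
\[ 2(l+1) \cdot \mu_x^1(B_{\mathcal F}[x,1]) = 2l+2. \]

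The hard part will be the endpoint bookkeeping, specifically accounting for the extra ball $B_{\mathcal F}[a_{l+1}, 1]$, whose center satisfies $2(l+1)r - 1 \in (-r, r]$ and therefore may straddle $\varphi_1(x)$. As in the proof of Lemma~\ref{lemma:aux}, I expect the telescoping symmetry to absorb the boundary contributions cleanly: the extra $a_{l+1}$-ball precisely adds one more copy of the full ``cycle'' of pieces $J_k$ (together with their $\varphi_1$-shifts), raising the multiplicity of each $J_k$ from $2l$ to $2(l+1)$ and hence the total count by exactly $2$ relative to Lemma~\ref{lemma:aux}. A careful case analysis at the endpoints, distinguishing $2(l+1)r -1 \leq 0$ from $2(l+1)r-1 > 0$, should confirm that no boundary arc is miscounted.
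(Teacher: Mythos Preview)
Your proposal is correct and follows exactly the paper's approach: the paper's proof of this lemma consists of the single sentence ``The proof is identical to the proof of Lemma~\ref{lemma:aux},'' and you propose precisely to rerun that multiplicity-counting argument with the enlarged landmark set including $a_{l+1}$. Your additional remarks on endpoint bookkeeping simply spell out the details the paper leaves implicit.
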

\begin{proof}
The proof is identical to the proof of Lemma \ref{lemma:aux}.
\end{proof}

\vspace{.3cm}
\textbf{Case 1: $D$ has full measure.} First of all, we will prove that in this case we must have $\underline{\Delta}\leq \overline{\Delta}<\infty$. Assume that $\overline{\Delta}=\infty$. Consider a typical fiber $\mathcal F(x)$ with $x\in D$ and take any $k\geq 1$ fixed. On Lemma \ref{lemma:aux} take $r:=\varepsilon_k$ and let $l, a_i, b_i$, $1\leq i\leq l$ be as in the statement of the respective lemma. 
For each $1\leq i \leq l$ we have
\begin{equation}
 \frac{\mu^1_{a_i}(B_{\mathcal F}[a_i,\varepsilon_k]))}{\varepsilon_k} \geq k  \Rightarrow  \mu^1_{a_i}(B_{\mathcal F}[a_i,\varepsilon_k]) \geq k \varepsilon_k,
\end{equation}
and similarly we obtain
\begin{equation}
\mu^1_{b_i}(B_{\mathcal F}[b_i,\varepsilon_k]) \geq k \varepsilon_k.\end{equation}
Now observe that 
\begin{align} \label{eq:xai} 
\mu_x^1(B_{\mathcal F}[a_i,\varepsilon_k])) = & \mu_x^1(B_{\mathcal F}[a_i,1])) \cdot \mu_{a_i}^1(B_{\mathcal F}[a_i,\varepsilon_k]) \\
\mu_x^1(B_{\mathcal F}[b_i,\varepsilon_k])) = & \mu_x^1(B_{\mathcal F}[b_i,1])) \cdot \mu_{b_i}^1(B_{\mathcal F}[b_i,\varepsilon_k]) \label{eq:xbi}
\end{align}
Taking the sum over $i$ we have
\begin{align*}
 \mu_x^1(B_{\mathcal F}[x,1]) \geq \sum_{i=1}^{l} \mu_x^1(B_{\mathcal F}[a_i,\varepsilon_k]) +& \sum_{i=1}^{l} \mu_x^1(B_{\mathcal F}[b_i,\varepsilon_k]) \\
= \sum_{i=1}^{l} \mu_x^1(B_{\mathcal F}[a_i,1])) \cdot \mu_{a_i}^1(B_{\mathcal F}[a_i,\varepsilon_k]) +& \sum_{i=1}^{l} \mu_x^1(B_{\mathcal F}[b_i,1])) \cdot \mu_{b_i}^1(B_{\mathcal F}[b_i,\varepsilon_k]),
\end{align*}
using ~\eqref{eq:ai} and ~\eqref{eq:bi} we get,
\[ \mu_x^1(B_{\mathcal F}[x,1]) \geq \left( \sum_{i=1}^{l} \mu_x^1(B_{\mathcal F}[a_i,1])) + \sum_{i=1}^{l} \mu_x^1(B_{\mathcal F}[b_i,1])) \right) \cdot k \varepsilon_k.\]
Thus, from the conclusion of Lemma ~\ref{lemma:aux} we have that
\[ \mu_x^1(B_{\mathcal F}[x,1]) \geq 2 \left \lfloor \frac{1}{2}\left(\frac{1}{\varepsilon_k}-1\right)\right \rfloor \cdot \varepsilon_k \cdot k.\]
As the left side is finite and the right side goes to infinity as $k$ goes to infinity we obtain a contradiction. Thus indeed $\overline{\Delta}$ is finite.

%
%We can take at least $i(k) = \lfloor{1/\varepsilon_k}\rfloor$ disjoint balls of radius $\varepsilon_k$ inside $B_{\mathcal F}[x,1]$. Let $b_1,b_2,...,b_{i(k)}$ be the centers of such balls. Then, for each $1\leq i \leq i(k)$
%\[k\cdot \varepsilon_k \leq \mu^1_x(B_{\mathcal F}[b_i,\varepsilon_k]) \Rightarrow i(k) \cdot k\cdot \varepsilon_k < \mu^1_x(B_{\mathcal F}[x,1]).\]
%Since $i(k)\cdot \varepsilon_k$ goes to $1$ as $k$ goes to infinity we have that $ \mu^1_x(B_{\mathcal F}[x,1]) =\infty$ yielding a contradiction. Thus, indeed $\overline{\Delta} <\infty$.

\begin{lemma} \label{lemma:uniformDelta}
\[\overline{\Delta} = \underline{\Delta} = 1. \]
\end{lemma}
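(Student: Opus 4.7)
My plan is to exploit Lemma~\ref{lemma:aux} as a combinatorial skeleton which, once every ball on its left-hand side is rescaled from radius $1$ to radius $\varepsilon_k$ using the uniform distortion estimate, produces a near-cover of $B_{\mathcal F}(x,1)$ whose total $\mu^1_x$-mass is pinned to $\mu^1_x(B_{\mathcal F}(x,1)) = 1$. Having already established $\overline{\Delta} < \infty$ in the preceding paragraph, I would fix a typical $x \in D$ and $k \geq 1$, let $\varepsilon_k$ be the sequence from Lemma~\ref{lema:sequenciaboa}, set $r = \varepsilon_k$, and apply Lemma~\ref{lemma:aux} to obtain points $a_1,\dots,a_l,b_1,\dots,b_l$ on $\mathcal F(x)$ with
\[
\sum_{i=1}^l \mu^1_x(B_{\mathcal F}[a_i,1]) + \sum_{i=1}^l \mu^1_x(B_{\mathcal F}[b_i,1]) = 2l,
\]
where $l = \lfloor (1/\varepsilon_k - 1)/2 \rfloor$.

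Since $x \in D$ means $\mathcal F(x) \subset \overline{\Pi}_x$, the uniform upper-distortion inequality applies at every $a_i$ and $b_i$, giving $\mu^1_{y}(B_{\mathcal F}[y,\varepsilon_k]) \in [(\overline{\Delta} - 1/k)\varepsilon_k, (\overline{\Delta} + 1/k)\varepsilon_k]$ for $y \in \{a_i, b_i\}$. Using the intrinsic cocycle $\mu^1_x|_{B_{\mathcal F}(y,1)} = \mu^1_x(B_{\mathcal F}[y,1]) \cdot \mu^1_y|_{B_{\mathcal F}(y,1)}$ (which follows from Proposition~\ref{prop:disintegration.unbounded} and Remark~\ref{remark:class.measures}), I would rewrite $\mu^1_x(B_{\mathcal F}[y,\varepsilon_k]) = \mu^1_x(B_{\mathcal F}[y,1]) \cdot \mu^1_y(B_{\mathcal F}[y,\varepsilon_k])$ and sum over $i$; combined with the identity above this sandwiches
\[
S_k := \sum_{i=1}^l \mu^1_x(B_{\mathcal F}[a_i,\varepsilon_k]) + \sum_{i=1}^l \mu^1_x(B_{\mathcal F}[b_i,\varepsilon_k])
\]
between $(\overline{\Delta} - 1/k) \cdot 2l\varepsilon_k$ and $(\overline{\Delta} + 1/k) \cdot 2l\varepsilon_k$.

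Now $S_k$ is also computable directly. Since $\mu(\mathrm{Per}(\phi)) = 0$, the orbit parametrization $t \mapsto \varphi_t(x)$ is injective, so the balls $B_{\mathcal F}[a_i,\varepsilon_k]$ and $B_{\mathcal F}[b_j,\varepsilon_k]$ form a pairwise essentially disjoint family of open subsets of $B_{\mathcal F}(x,1)$ whose union, in orbit-parameter coordinates, is $(\varepsilon_k - 1, (2l+1)\varepsilon_k - 1) \cup (\varepsilon_k, (2l+1)\varepsilon_k)$. As $\varepsilon_k \to 0$ and $2l\varepsilon_k \to 1$ these intervals fill up $(-1,0) \cup (0,1)$, and non-atomicity of $\mu^1_x$ kills both the shared ball-endpoints and the missing midpoint $\{x\}$, forcing $S_k \to \mu^1_x(B_{\mathcal F}(x,1)) = 1$. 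Combined with $2l\varepsilon_k \to 1$ the sandwich yields $\overline{\Delta} = 1$. Running the identical scheme with the lower-distortion analogue of Lemma~\ref{lema:sequenciaboa} inside the corresponding full-measure set $\underline{D}$ gives $\underline{\Delta} = 1$.

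The main obstacle is the direct limit $S_k \to 1$: it needs the simultaneous use of non-periodicity of $\mu$-a.e.\ orbit (so the balls can be faithfully indexed by disjoint real intervals), non-atomicity of $\mu^1_x$ (so the countably many endpoints and the midpoint $\{x\}$ contribute no mass), and the matching asymptotic $2l\varepsilon_k \to 1$ that ties the combinatorial count from Lemma~\ref{lemma:aux} to the normalization $\mu^1_x(B_{\mathcal F}(x,1)) = 1$.
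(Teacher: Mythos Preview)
Your argument is correct and rests on the same scaffolding as the paper's proof: Lemma~\ref{lemma:aux}, the cocycle identity $\mu^1_x(B_{\mathcal F}[y,\varepsilon_k]) = \mu^1_x(B_{\mathcal F}[y,1])\cdot \mu^1_y(B_{\mathcal F}[y,\varepsilon_k])$, and the uniform distortion estimate at every point of $\mathcal F(x)$ for $x\in D$. The difference is in how the equality $\overline{\Delta}=1$ is extracted. The paper splits it into two inequalities: the disjointness of the balls gives $S_k \leq \mu^1_x(B_{\mathcal F}[x,1]) = 1$ and hence $\overline{\Delta}\leq 1$, while the opposite inequality is obtained from a separate covering of $B_{\mathcal F}[x,1]$ by $2l+2$ balls via Lemma~\ref{lemma:aux2}. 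You instead squeeze $\overline{\Delta}$ from both sides at once by proving $S_k\to 1$ directly, which lets you dispense with Lemma~\ref{lemma:aux2} altogether. Your justification of $S_k\to 1$ could be stated a bit more carefully---the complement of the $2l$ balls inside $B_{\mathcal F}(x,1)$ consists of three subintervals (near the two endpoints $\varphi_{\pm 1}(x)$ and near $x$) whose lengths all tend to $0$, and non-atomicity of $\mu^1_x$ (equivalently, continuity of its distribution function on $[-1,1]$) then forces their $\mu^1_x$-mass to vanish---but this is routine to fill in. The trade-off is that the paper's route is slightly more elementary (only crude inequalities $S_k\leq 1$ and $1\leq S_k'$) while yours is a bit more economical.
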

\begin{proof}
For a given $k \in \mathbb N^{*}$, we know that for any $x \in \overline{\Pi}$
\begin{equation}\label{eq:all}
\left| \frac{\mu^1_x(B_{\mathcal F}(x,\varepsilon_k))}{\varepsilon_k} - \overline{\Delta}  \right|   \leq \frac{1}{k} .\end{equation} 

Consider the closed ball $B=B_{\mathcal F}[x,1] \subset \mathcal F(x)$. Given $\epsilon > 0$ take $k_0 \in \mathbb N$ such that $k_{0}^{-1} < \epsilon$. Let $r=\varepsilon_k$ and let $a_i, b_i$ be as in Lemma \ref{lemma:aux}. Thus, we have a family of disjoint balls inside $B_{\mathcal F}[x,1]$ centered at the points $a_i$ and $b_i$, $1\leq i \leq l:=\left \lfloor \frac{1}{2}\left( \frac{1}{\varepsilon_k}-1\right) \right \rfloor$.
%Now, take $k\in \mathbb N$ such that $\frac{1}{k} < \varepsilon_{k_0}$.  
%We need at least $s(k):=\lceil{1/\varepsilon_k}\rceil \leq 1/\varepsilon_k +1$ points, say $a_1,a_2,...,a_{s(k)}$, to cover $B$ with balls of radius $\varepsilon_k$. 
%As in ~\eqref{eq:ai} and ~\eqref{eq:bi} , for each $i$ we have
For each $1\leq i \leq l$ we have
\begin{equation}\label{eq:ai}
 \mu^1_{a_i}(B_{\mathcal F}[a_i,\varepsilon_k])) - \overline{\Delta}\varepsilon_k>- \epsilon \cdot \varepsilon_k \Rightarrow  \mu^1_{a_i}(B_{\mathcal F}[a_i,\varepsilon_k]) > \varepsilon_k(\overline{\Delta}-\epsilon),
\end{equation}
and similarly we obtain
\begin{equation}\label{eq:bi}
\mu^1_{b_i}(B_{\mathcal F}[b_i,\varepsilon_k]) > \varepsilon_k(\overline{\Delta}-\epsilon).\end{equation}
%\[ \mu^1_{a_i}(B_{\mathcal F}[a_i,\varepsilon_k])) - \overline{\Delta}\varepsilon_k>- \epsilon \cdot \varepsilon_k \Rightarrow  
%\begin{align*}
%\mu^1_{a_i}(B_{\mathcal F}[a_i,\varepsilon_k]) > & \varepsilon_k(\overline{\Delta}-\epsilon) \\
%\mu^1_{b_i}(B_{\mathcal F}[b_i,\varepsilon_k]) > & \varepsilon_k(\overline{\Delta}-\epsilon).\end{align*}
%Now observe that 
%\[\mu_x^1(B_{\mathcal F}[a_i,\varepsilon_k])) = \mu_x^1(B_{\mathcal F}[a_i,1])) \cdot \mu_{a_i}^1(B_{\mathcal F}[a_i,\varepsilon_k])\]
%\[\mu_x^1(B_{\mathcal F}[b_i,\varepsilon_k])) = \mu_x^1(B_{\mathcal F}[b_i,1])) \cdot \mu_{b_i}^1(B_{\mathcal F}[b_i,\varepsilon_k]).\]
Therefore, by ~\eqref{eq:xai} and \eqref{eq:xbi}, 
\begin{eqnarray*}
 1=\mu^1_x(B_{\mathcal F}[x,1]) & > & \left( \sum_{i=1}^{l}\mu_{x}^{1}(B_{\mathcal F}[a_i,1]) + \sum_{i=1}^{l}\mu_{x}^{1}(B_{\mathcal F}[b_i,1]) \right) \cdot \varepsilon_k\cdot ({\overline{\Delta}-\epsilon}) .\\
\end{eqnarray*}
By Lemma \ref{lemma:aux} we have
\begin{eqnarray*}
 1=\mu^1_x(B_{\mathcal F}[x,1]) & > & 2 \cdot \left\lfloor \frac{1}{2}\left( \frac{1}{\varepsilon_k}-1\right) \right \rfloor \cdot \varepsilon_k\cdot ({\overline{\Delta}-\epsilon}),\\
\end{eqnarray*}
for every $k\geq 1$. Taking $k\rightarrow \infty$ we have that $$\overline{\Delta}\leq 1.$$
Similarly, by taking $r:=\varepsilon_k$ and  $a_i$, $b_i$, $1\leq i \leq l:=\left \lfloor \frac{1}{2}\left( \frac{1}{\varepsilon_k}-1\right) \right \rfloor$, as in Lemma \ref{lemma:aux2} we cover $B_{\mathcal F}[x,1]$ with $2l+2$ balls of radius $\varepsilon_k$. Now, we know that
\begin{align*}\mu^1_{a_i}(B_{\mathcal F}[a_i,\varepsilon_k])< & \epsilon\cdot \varepsilon_k + \overline{\Delta}\varepsilon_k \\
\mu^1_{b_i}(B_{\mathcal F}[b_i,\varepsilon_k])< & \epsilon\cdot \varepsilon_k + \overline{\Delta}\varepsilon_k,\end{align*}
%and that 
%\[\mu_x^1(B_{\mathcal F}[a_i,\varepsilon_k])) = \mu_x^1(B_{\mathcal F}[a_i,1])) \cdot \mu_{a_i}^1(B_{\mathcal F}[a_i,\varepsilon_k])\]
%\[\mu_x^1(B_{\mathcal F}[b_i,\varepsilon_k])) = \mu_x^1(B_{\mathcal F}[b_i,1])) \cdot \mu_{b_i}^1(B_{\mathcal F}[b_i,\varepsilon_k]),\]
for $1\leq i \leq \left \lfloor \frac{1}{2}\left( \frac{1}{\varepsilon_k}-1\right) \right \rfloor$.
Consequently, again using ~\eqref{eq:xai} and ~\eqref{eq:xbi}, we have
\[1= \mu^1_x(B_{\mathcal F}[x,1]) < \left( \sum_{i=1}^{l+1}\mu_{x}^{1}(B_{\mathcal F}[a_i,1]) + \sum_{i=1}^{l+1}\mu_{x}^{1}(B_{\mathcal F}[b_i,1]) \right) \cdot \varepsilon_k(\epsilon+\overline{\Delta}). \]
By Lemma \ref{lemma:aux2}
\[1= \mu^1_x(B_{\mathcal F}[x,1]) < (2l +2) \cdot \varepsilon_k(\epsilon+\overline{\Delta}) \Rightarrow \overline{\Delta}\leq 1.\]
Consequently we have that $\overline{\Delta} = 1$. Repeating the same argument with $\underline{\Delta}$ we conclude that 
\[\overline{\Delta} = \underline{\Delta} =1.\]
\end{proof}

Next we are able to conclude that $\mu^1_x$ is equivalent to the measure induced by the flow $\phi$ on the orbits 

\begin{lemma} \label{lemma:equallebesgue}
For almost every $x\in X$
\[\mu^1_x(B) = 2^{-1} \cdot \lambda_{\mathcal F(x)}(B)\]
where $\lambda_{\mathcal F(x)}$ is the measure on $\mathcal F(x)$ induced by the flow $\phi$ (i.e. $\lambda_{\mathcal F(x)}([x,y])=|t|$ if $y = \phi_t(x)$)
\end{lemma}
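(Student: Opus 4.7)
The plan is to first establish the identity $\mu^{1}_{x}(B)=2^{-1}\lambda_{\mathcal{F}(x)}(B)$ on closed intervals $B=\phi([0,T]\times\{a\})\subset\mathcal{F}(x)$ and then extend it to all Borel subsets of the orbit by uniqueness of a Borel measure determined by its values on a generating family. Since $D$ has full $\mu$-measure by the standing assumption of Case 1, for $\mu$-almost every $x$ every point $y\in\mathcal{F}(x)$ belongs to $\overline{\Pi}_{x}$. Combined with Lemma \ref{lemma:uniformDelta} which gives $\overline{\Delta}=1$, this means that along the sequence $\varepsilon_{k}\to 0$ of Lemma \ref{lema:sequenciaboa}, the bound
\[(1-1/k)\,\varepsilon_{k} \;\leq\; \mu^{1}_{x}(B_{\mathcal{F}}(y,\varepsilon_{k})) \;\leq\; (1+1/k)\,\varepsilon_{k}\qquad\text{holds for every }y\in\mathcal{F}(x).\]
The crucial feature is that this estimate is \emph{uniform} in $y\in\mathcal{F}(x)$, not merely pointwise; this uniformity is precisely what membership of the entire orbit in $\overline{\Pi}_{x}$ encodes.

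With this bound in hand, fix a closed interval $I=\phi([0,T]\times\{a\})\subset\mathcal{F}(x)$ of flow-length $T>0$ and let $N_{k}=\lfloor T/(2\varepsilon_{k})\rfloor$. The subintervals $J_{i}=\phi([2i\varepsilon_{k},2(i+1)\varepsilon_{k}]\times\{a\})$, for $i=0,\ldots,N_{k}-1$, are pairwise disjoint balls of radius $\varepsilon_{k}$ centered at $c_{i}:=\phi_{(2i+1)\varepsilon_{k}}(a)\in\mathcal{F}(x)$, and their union covers $I$ except for a residual piece $R_{k}$ of $\lambda$-length strictly less than $2\varepsilon_{k}$. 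Summing the uniform bound across $i=0,\ldots,N_{k}-1$ yields
\[N_{k}(1-1/k)\,\varepsilon_{k} \;\leq\; \sum_{i=0}^{N_{k}-1}\mu^{1}_{x}(J_{i}) \;\leq\; N_{k}(1+1/k)\,\varepsilon_{k},\]
and the residual $R_{k}$ can be enclosed in a single ball of radius $\varepsilon_{k}$ centered at its midpoint, so $\mu^{1}_{x}(R_{k})\leq(1+1/k)\varepsilon_{k}\to 0$. Since $N_{k}\varepsilon_{k}\to T/2$, letting $k\to\infty$ gives $\mu^{1}_{x}(I)=T/2=2^{-1}\lambda_{\mathcal{F}(x)}(I)$. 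Under the assumption $\mu(\mathrm{Per}(\phi))=0$, for $\mu$-almost every $x$ the plaque $\mathcal{F}(x)$ is parametrized injectively by the flow, and closed intervals form a $\pi$-system generating its Borel $\sigma$-algebra; hence the two Borel measures $\mu^{1}_{x}$ and $2^{-1}\lambda_{\mathcal{F}(x)}$ agree on all Borel subsets.

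\textbf{Anticipated difficulty.} The heart of the argument is the uniformity across the full orbit, which rests on the nontrivial fact that $D$ has full $\mu$-measure (Case 1). Without this one would only have pointwise convergence at the $\mu$-typical base point $x$, and the Riemann-type partition above could in principle place centers $c_{i}$ outside $\overline{\Pi}_{x}$ where no rate is controlled. Since this obstacle has already been removed by the previous lemmas, the remaining work is a clean Riemann-sum estimate with a vanishing boundary term, and extension from intervals to Borel sets is standard.
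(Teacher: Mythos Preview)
Your proof is correct and follows essentially the same Riemann-sum strategy as the paper: partition the interval $[a,\phi_T(a)]$ into $\varepsilon_k$-balls, apply the uniform bound coming from $\overline{\Pi}_x=\mathcal{F}(x)$ together with $\overline{\Delta}=1$, sum, and let $k\to\infty$. Your version is in fact slightly more streamlined: you read the bound $(1-1/k)\varepsilon_k\leq\mu^{1}_{x}(B_{\mathcal F}(c_i,\varepsilon_k))\leq(1+1/k)\varepsilon_k$ directly from the definition of $\overline{\Pi}_x$, whereas the paper passes through the renormalized measures $\mu^{1}_{c_j}$ and then invokes the counting identity of Lemma~\ref{lemma:aux} to compute $\sum_j\mu^{1}_{x}(B_{\mathcal F}[c_j,1])$ before reaching the same sandwich inequality.
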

\begin{proof}
Take any typical plaque $\mathcal F(x)$ and any point $a\in \mathcal F(x)$. For each $r>0$ we can write the set $[a,\phi_r(a)]$ as a disjoint union as below
\[[a,\phi_r(a)] = \left(\bigcup_{j=0}^n [\phi_{2j\varepsilon_k}(a),\phi_{2(j+1)\varepsilon_k}(a)] \right) \cup J_k, \quad n:= \left \lfloor r/2\varepsilon_k \right \rfloor \]
where $J_k = [\phi_{2(n+1)\varepsilon_k}(a),\phi_r(a)]$. Each of the terms appearing on the right side of the previous equality, except for $J_k$, is a closed $\mathcal F$-ball of radius $\varepsilon_k$. By Lemma \ref{lemma:uniformDelta}, $\overline{\Delta} = \underline{\Delta}=1$ so
\[\varepsilon_k(1-1/k)<\mu_{c_j}^1([\phi_{2j\varepsilon_k}(a),\phi_{2(j+1)\varepsilon_k}(a)]) < \varepsilon_k(1+1/k),\]
where $c_j:=\phi_{2j\varepsilon_k}(a)+\varepsilon_k$, $j=0,1,\ldots, n$.
Also, we know that 
\[\mu_x^1(B_{\mathcal F}[c_j,\varepsilon_k]) = \mu_x^1(B_{\mathcal F}[c_j,1]) \cdot \mu_{c_j}^1(B_{\mathcal F}[c_j,\varepsilon_k]).\]
Therefore we have
\begin{align*}
\left( \sum_{j=0}^{n} \mu_x^1(B_{\mathcal F}[c_j,1]) \right) \cdot \varepsilon_k(1-1/k)  \leq &\quad \mu_x^1[a,\phi_r(a)] \leq   \\
\leq & \left( \sum_{j=0}^{n} \mu_x^1(B_{\mathcal F}[c_j,1]) \right)  \cdot \varepsilon_k(1+1/k) + \mu_x^1(J_k). \end{align*}
Repeating the argument of the proof of Lemma \ref{lemma:aux}, we see that $ \sum_{j=0}^{n} \mu_x^1(B_{\mathcal F}[c_j,1]) =n+1$. Thus
\[ \lfloor r/2\varepsilon_k \rfloor \cdot \varepsilon_k(1-1/k)  \leq  \mu_x^1([a,\phi_r(a)]) \leq \lfloor r/2\varepsilon_k \rfloor \cdot \varepsilon_k(1+1/k) + \mu_x^1(J_k).\]
Taking $k\rightarrow +\infty$ we have
\[\mu_x^1([a,\phi_r(a)]) = r/2\]
as we wanted to show.
%
%
%
%
%since the estimative \tco{red}{(RV: checar palavra)} holds for every point in $R$, we have
%\[\varepsilon_k\cdot (\overline{\Delta}-1/k) + \ldots + \varepsilon_k\cdot (\overline{\Delta}-1/k)+\mu_x(J_k)<\mu_x([a,\phi_{r}(a)]) \]
%and
%\[\mu_x([a,\phi_r(a)]) < \varepsilon_k\cdot (\overline{\Delta}+1/k) + \ldots + \varepsilon_k\cdot (\overline{\Delta}+1/k) + \mu_x(J_k) \]
%which implies 
% \[\mu_x([a,\phi_r(a)]) = 2^{-1}r.\]
%% \textcolor{red}{$2C$ ou $C$?}
% By the definition of $\lambda_x$ the result follows.
\end{proof}

\textbf{Case 2: $D$ has null measure.}
Since $\overline \Pi_x$ is closed in the plaque $\mathcal F(x)$ for all $x \in D$ and $\mu(\overline \Pi)=1$, it is true that for a full measurable set $\mathfrak D$, if $ x\in \mathfrak D$ then $x\notin \overline{\Pi}$ if, and only if, there is $r>0$ with $\mu^1_x(B_{\mathcal F}(x,r)) = 0$. Now consider $\{q_1,q_2,...\}$ to be an enumeration of the rationals.
 
For each $i\geq 1$ let us define the function $S_i$ as
\[S_i(x) = \max\{q_j: 1\leq j \leq i \text{ and } \mu^1_y(B_{\mathcal F}(y,q_j))=0 \text{ for some } y \in \mathcal F(x) \}.\]

\begin{lemma}
 $S_i$ is an invariant measurable function for all $i \in \mathbb N$.
\end{lemma}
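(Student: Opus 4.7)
The plan is to handle the two conclusions separately: invariance is essentially formal, while measurability is the substantive part and requires Souslin theory to handle the saturation by orbits.

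\textbf{Invariance.} Since $f = \phi_1$ maps each orbit to itself, $\mathcal F(f(x)) = \mathcal F(x)$ for every $x \in X$. The defining condition of $S_i(x)$, namely the existence of some $y \in \mathcal F(x)$ with $\mu^1_y(B_{\mathcal F}(y,q_j)) = 0$, depends only on the orbit $\mathcal F(x)$ (the set of candidate $y$'s is unchanged) and on the equivalence classes $[\mu_y]$ (which are the leafwise conditional classes, again determined by the orbit). Hence $S_i(f(x)) = S_i(x)$ for all $x$.

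\textbf{Measurability.} For each $j \in \{1,\ldots,i\}$, I would first set
\[ A_j := \{x \in X : \mu^1_x(B_{\mathcal F}(x,q_j)) = 0 \}. \]
By Proposition \ref{prop:disinmeasurable2}, the map $x \mapsto \mu^1_x(B_{\mathcal F}(x,q_j))$ is Borel measurable, so $A_j$ is a Borel set. The $\mathcal F$-saturation
\[ \mathcal F(A_j) = \{x \in X : \mathcal F(x) \cap A_j \neq \emptyset\} = \bigcup_{n\in\mathbb Z} \phi\bigl([n,n+1] \times A_j\bigr) \]
is a countable union of continuous images of Borel sets of the Souslin space $\mathbb R \times X$, so it is Souslin by the stability properties recalled in Section~\ref{sec:preliminaries}. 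In particular $\mathcal F(A_j)$ is universally measurable, hence $\mu$-measurable after completion.

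\textbf{Assembling $S_i$.} With the convention that $S_i(x)$ equals some sentinel value (say $0$) on the complement of $\bigcup_{j\leq i}\mathcal F(A_j)$, for each $q_{j_0}$ with $1\le j_0 \le i$ the level set
\[ \{x : S_i(x) = q_{j_0}\} \;=\; \mathcal F(A_{j_0}) \;\cap\; \bigcap_{\substack{1\le j\le i\\ q_j > q_{j_0}}} \mathcal F(A_j)^c \]
is a finite Boolean combination of universally measurable sets, hence universally measurable. Since $S_i$ takes only finitely many values, this shows $S_i$ is measurable (with respect to the $\mu$-completion of the Borel $\sigma$-algebra).

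\textbf{Main obstacle.} The only real subtlety is that the orbit-saturation of a Borel set need not be Borel, so one cannot work with Borel measurability alone; this is precisely where one must appeal to the Souslin theory stated in Section~\ref{sec:preliminaries} in order to guarantee universal measurability of $\mathcal F(A_j)$. Everything else is either immediate from the invariance of orbits under $f$ or from the measurability statement in Proposition~\ref{prop:disinmeasurable2}.
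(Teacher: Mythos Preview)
Your argument is correct. Invariance is immediate, and for measurability you correctly identify Borel sets $A_j$, pass to their orbit saturations via Souslin theory, and then read off the (finitely many) level sets of $S_i$ as Boolean combinations. This is complete.

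The route differs from the paper's. Rather than invoking Souslin theory to handle the saturation $\mathcal F(A_j)$, the paper first applies Lusin's theorem to obtain a compact set $K$ of positive measure on which each $Q_j(x)=\mu_x^1(B_{\mathcal F}(x,q_j))$ is continuous; then $Q_j^{-1}(\{0\})\cap K$ is compact, so its $\mathcal F$-saturation $\bigcup_{n\in\mathbb Z}\phi([n,n+1]\times(Q_j^{-1}(\{0\})\cap K))$ is $\sigma$-compact and therefore Borel. The level sets of $S_i$ are then described on the full-measure set $\mathfrak K=\bigcup_n f^n(K)$ by an inductive scheme over the ordering $q_{\sigma(1)}<\cdots<q_{\sigma(i)}$. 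What the paper's approach buys is Borel (not merely universally measurable) level sets, at the price of a more elaborate reduction; what your approach buys is directness and transparency, at the price of relying on universal measurability of Souslin sets. Both are legitimate, and the paper's Souslin-theory preliminaries were presumably placed there precisely to license the shortcut you took.
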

\begin{proof}
For each $i \in \mathbb N$ define the function $Q_i:  \mathfrak D \rightarrow [0,\infty)$ by
\[Q_i(x) = \mu^1_x(B_{\mathcal F}(x,q_i)).\]
By proposition \ref{prop:disinmeasurable2} $Q_i(x)$ is a measurable function for every $i$ and, by an standard measure theory argument, we may take a compact set $K \subset \mathfrak D$ of positive measure such that $Q_i | K$ is continuous for every $i$. Now, given $j \in \mathbb N$, let $\sigma$ be a permutation of $\{1,...,j\}$ such that $q_{\sigma(1)}<q_{\sigma(2)}<...<q_{\sigma(j)}$. 
Observe that for $$\mathfrak K = \bigcup_{n\in \mathbb Z} f^n(K)$$ we have
\[S_j^{-1}(\{q_{\sigma(j)}\}) \cap \mathfrak K = \bigcup_{n\in \mathbb Z} f^n ( \mathcal F(Q_{\sigma(j)}^{-1}(\{0\}) \cap K )),\]
which is a measurable set since $Q_{\sigma(n)}^{-1}(\{0\}) \cap K $ is a Borel set. Now,
\[S_j^{-1}(\{q_{\sigma(j-1)}\}) \cap \mathfrak K = \bigcup_{n\in \mathbb Z} f^n ( \mathcal F(Q_{\sigma(j-1)}^{-1}(\{0\}) \cap K )) \setminus (S_j^{-1}(\{q_{\sigma(j)}\}) \cap \mathfrak K),\]
which is also a measurable set. Inductively we prove that $S_j^{-1}(\{q_{\sigma(i)}\}) \cap \mathfrak K$ is measurable for all $1\leq i \leq j$. Since, by ergodicity, the set $\mathfrak K \subset \mathfrak D$ has full measure we conclude that $S_j(x)$ is measurable for every $j \geq 1$.
\end{proof}

Let $S(x) := \lim_{i\rightarrow \infty} S_i(x)$. $S$ is measurable and $f$-invariant thus it is constant almost everywhere, call this constant $r_0$. This means that for a full measure set $Y \subset \mathfrak D$, for every $x\in Y$ the plaque $\mathcal F(x)$ has a finite number of intervals of radius $r_0$ outside $\overline \Pi_x$. Let us call these intervals as ``bad" intervals.

Now consider the set $\mathfrak M$ formed by the median points of these ``bad" intervals of radius $r_0$. Notice that $\mathfrak M$ is a measurable set, since it is inside a set of zero measure and also that $f(\mathfrak M)=\mathfrak M$. 
%because of the invariance of the Souslin metric system.

Let $\varphi:(0,1) \times (0,1)^k \rightarrow U$ be a local chart for $\mathcal F$ such that the $\mathcal F(\mathfrak M \cap U)$, the $\mathcal F$ saturation of these ``bad" intervals inside $U$, has positive measure. Set $\Sigma := \pi_1(\varphi^{-1}(\mathfrak M\cap U))$, where $\pi_1:(0,1) \times (0,1)^k \rightarrow (0,1)$ is the projection onto the first coordinate. Now we may apply The Measurable Choice Theorem \ref{theo:MCT} to obtain a measurable function $ \mathfrak F: \Sigma \rightarrow (0,1)$ such that $(x,\mathfrak F(x)) \in  \varphi^{-1}(\mathfrak M\cap U)$ for all $x \in \Sigma$. Again by standard arguments, using Lusin's theorem, we may assume $\Sigma$ to be compact and such that $\mathfrak F$ is a continuous function.

Now consider the set $\mathfrak M_0:= \varphi(\text{graph }\mathfrak F)$, which is a Borel set since the graph of $\mathfrak F$ is a compact set. Notice that our construction implies that $\mathcal F(\mathfrak M_0)$ has positive measure. Now define the following $f$ invariant set $$\mathfrak M_1:= \bigcup_{n \in \mathbb Z} f^n(\mathfrak M_0).$$ By ergodicity the set $\mathcal F (\mathfrak M_1)$ has full measure.

% \begin{itemize}
% \item For a full measure set, $x\notin \overline{\Pi}$ if, and only if, there is $r>0$ with $\mu_x(B_{d_x}(x,r)) = 0$;
% \item Take an enumeration of $\mathbb Q = \{q_1,q_2,...\}$.
% \item Let $S_i$ be the function:
% \[S_i(x) = \max\{q_j: 1\leq j \leq i \text{ and } \mu_y(B_{d_y}(y,q_j))=0 \text{ for some } y \in \mathcal F(x) \}.\]
% This function is measurable since $(x,r) \mapsto \mu_x(B_{d_x}(x,r))$ is measurable.
% \item  Let $S(x) = \lim_{i\rightarrow \infty} S_i(x)$. $S$ is measurable and clearly $f$-invariant. Thus it is constant almost everywhere. This means that for a full measure set $Y$ , for every $x\in Y$ the plaque $\mathcal F(x)$ has (a finite number of) bad intervals.
% \item We now use the measurable choice theorem to take a measurable function $g: Y/\mathcal F \rightarrow Y$ such that $g(y)$ is inside the union of bad intervals. Since $g$ is  measurable, it is continuous on a compact set $K\subset Y/\mathcal F$ of positive (projected) measure. Consequently, the image $g(K)$ is a Borel set. Take $K' = \bigcup f^n(K)$ and we work with the set of points $P = \bigcup f^n(g(K))$. Since it is a Borel set, we can flow it and obtain a Souslin set for every $r$.
% We take the first $r$ where it jumps from zero to positive measure and we finish the argument.
% \end{itemize}

The set $\mathfrak M_1$ intersects almost each plaque in a finite (constant) number of points. Notice that, for each $r \in \mathbb R_+$ the invariant set $$\mathfrak M_1^r:= \bigcup_{x \in \mathfrak M_1} B_{\mathcal F}(x,r)$$ has zero or full measure. Let $\alpha_0$ such that $\mu(\mathfrak M_1^r)=0$ if $r < \alpha_0$ and $\mu(\mathfrak M_1^r)=1$ if $r \geq \alpha_0$. This implies that the extreme points of $B_{\mathcal F}(x,\alpha_0)$ for $x \in \mathfrak M_1$ forms a set of atoms.

Which is an absurd, because we are assuming we are in the non-atomic case. The case $\overline{\Delta}=\infty$ is similar. 

The measure $\mu_{\mathcal F(x)}$ of the statement of the result is given, due to Lemma \ref{lemma:equallebesgue}, as $2^{-1}\lambda_{\mathcal F(x)}$.

Let us now work with the case $\mu(Per(\phi))>0$. By ergodicity of $f$ and the $f$ invariance of $Per(\phi)$ we have $\mu(Per(\phi))=1$. Hence the partition of $X$ given by each periodic orbit of $\phi$ and the set $X\setminus Per(\phi)$ forms a measurable partition (e.g. \cite[Proposition 2.5]{PTV}). We can then disintegrate $\mu$ on this partition. Denote the family of conditional measures by $\{\mu_{\mathcal F(x)}\}$. If the set of singularities for the flow $\phi$ have positive measure then it is clear that one should have full measure, in particular the measure is atomic and we fall on the first item. If not then we repeat the prove but instead of working with $\mu_x^1$ we can simply work with the disintegrated measures $\mu_{\mathcal F(x)}$ and the theorem follows.

$\hfill \square$

\section*{Acknowledgements}

We would like to thank Ali Tahzibi for usefull conversations. G.P. was partially supported by FAPESP grant 2016/05384-0 and R.V. was partially supported by FAPESP grant 2016/22475-9.
\bibliographystyle{plain}
\bibliography{Referencias.bib}
\end{document}